\theoremstyle{plain}
\newtheorem{thm}{\bf Theorem}[section]
\newtheorem{prop}[thm]{\bf Proposition}
\newtheorem{lemma}[thm]{\bf Lemma}
\newtheorem{corollary}[thm]{\bf Corollary}
\theoremstyle{definition}
\newtheorem{definition}[thm]{\bf Definition}
\theoremstyle{remark}
\newtheorem{remark}[thm]{\bf Remark}
\newtheorem{example}[thm]{\bf Example}
\theoremstyle{example}
\def \HF{{\operatorname{HF}_{\bar{A}(G)}}}
\def \HP{{\operatorname{HP}_{\bar{A}(G)^{(2)}}}}
\def \ara{{\operatorname{ara}}}
\def \rank{{\operatorname{rank}}}
\def \height{{\operatorname{ht}}}
\def \reg{{\operatorname{reg}}}
\def \pd{{\operatorname{pd}}}
\def \mm{{\mathfrak{m}}}
\def \aa{{\alpha}}
\def \bb{{\beta}}
\def \cc{{\gamma}}
\def \NN{\mathbb N}
\def \L{\mathcal L}
\def \P{\mathcal P}
\def \AG{\bar{A}(G)}
\def \gdim{\operatorname{gdim}}
\def\ini{\operatorname{\rm in}}
\def\GB{Gr\"obner basis}
\def\tv{\ensuremath{\sqcup}}
\def\tw{\ensuremath{\sqcap}}
\begin{document}
\title{Koszulness, Krull Dimension and Other Properties of Graph-Related Algebras }
\author{Alexandru Constantinescu \\
\footnotesize Dipartimento di Matematica\\
\footnotesize Univ. degli Studi di Genova, Italy\\
\footnotesize \url{constant@dima.unige.it}
\and \setcounter{footnote}{3} Matteo Varbaro \\
\footnotesize Dipartimento di Matematica\\
\footnotesize Univ. degli Studi di Genova, Italy\\
\footnotesize \url{varbaro@dima.unige.it}}
\date{{\small \today}} 
\maketitle

\begin{abstract}
\noindent 
The algebra of basic covers  of a graph $G$, denoted by $\AG$, was introduced by J\"urgen Herzog as a suitable quotient of the vertex cover algebra.
In this paper we show that if the graph is bipartite then $\AG$ is a homogeneous algebra with straightening laws and thus is Koszul.
Furthermore, we compute the Krull dimension of $\AG$ in terms of the combinatorics of $G$. As a consequence we get new upper bounds on the arithmetical rank of monomial ideals of pure codimension $2$. Finally, we characterize the Cohen-Macaulay property and the Castelnuovo-Mumford regularity of the edge ideal of a certain class of graphs.
\end{abstract}

\section*{ Introduction}

Given a graph $G$ on $n$ vertices its cover ideal is the ideal $J(G)=\bigcap (x_i,x_j)\subseteq K[x_1, \ldots ,x_n]$, where the intersection runs over the edges of $G$. The symbolic Rees algebra of this ideal is also known as the vertex cover algebra of $G$. 
In their paper \cite{HHT} Herzog, Hibi and Trung have studied this algebra in the more general context of hypergraphs.  
 In the present paper we study the symbolic fiber cone of $J(G)$, denoted by $\AG$.  The notation is due to Herzog who presented this ring as  the algebra of basic vertex covers of $G$.\\

In the first section of this paper we recall some classical definitions and results from combinatorics and commutative algebra that we will use throughout this work.
In Section 2 we  prove  that for a bipartite graph $G$, the algebra $\AG$ is Koszul. This problem was suggested to us during an informal discussion by J\"urgen Herzog. The Koszul property  follows from  the homogeneous Algebra with Straightening Laws (ASL) structure that we can give to $\AG$. In a joint paper with Benedetti, \cite{BCV}, we gave an equivalent combinatorial condition for $\AG$ being a domain for a bipartite graph. This combinatorial property is called weak square condition (WSC). 
The ASL structure provides in the bipartite case another equivalent condition: $\AG$ is a domain if and only if  $\AG$ is a Hibi ring. 
Using this structure and a result of  Hibi from \cite{H} we are able to characterize for bipartite graphs the Gorenstein domains.
The non-integral case turns out to be  more complicated. However, from   
the description of the poset on which $\AG$ is an ASL we can deduce some nice consequences. For instance, we can produce many examples of bipartite graphs such that $\AG$ is not Cohen-Macaulay, using results of Kalkbrener and Sturmfels \cite{KS} and of the second author \cite{V}. 
With some additional assumption on the combinatorics of the graph  we can prove that $\AG$ is Cohen-Macaulay if and only if it is equidimensional.\\

In the last two sections we will leave the setting of bipartite graphs and  consider general graphs without loops, multiple edges or isolated points. 
In the third section of the paper we compute in terms of the combinatorics of the graph the Krull dimension of $\AG$. The combinatorial invariant that we introduce is called the \emph{graphical dimension}. It turns out it has a lower bound given by the \emph{paired-domination number}  and an upper bound given by the \emph{matching number} of the graph. When the base field is infinite the dimension of  $\AG$ is an upper bound for the arithmetical rank of $J(G)$ localized at the maximal irrelevant ideal, so we get interesting upper bounds for the arithmetical rank of a monomial ideal of pure codimension 2 after having localized at the maximal irrelevant ideal, refining  a result of Lyubeznik \cite{Ly1}.

In the fourth and last section we focus our attention on the edge ideal of the graph, namely $I(G) = (x_ix_j ~:~ \{ij\} \textrm{~is an edge of~}G) \subseteq S = K[x_1,\ldots,x_n]$. Two problems that have recently caught the attention of many authors (see for instance \cite{Fr, HV, HH, Ka, Ku,  Zh}) are the characterization in terms of the combinatorics of $G$ of the Cohen-Macaulay property and the Castelnuovo-Mumford regularity of $S/I(G)$. Our approach is to restrict the problem to a subgraph $\pi(G)$ of $G$ which maintains some useful properties of the edge ideal. This graph is constructed passing through another graph, namely $G^{0-1}$, introduced by Benedetti and the second author in \cite{BV}. Using this tool we are able to extend a result of Herzog and Hibi from \cite{HH} regarding the Cohen-Macaulay property  and a result of Kummini from \cite{Ku} regarding the Castelnuovo-Mumford regularity.\\

The authors wish to thank J\"urgen Herzog for suggesting this topic and for many useful discussions which led to new stimulating  questions and interesting observations. We also wish  to thank Aldo Conca and Bruno Benedetti for their useful comments.

\section{Terminology and Preliminaries}
For the convenience of the reader we include in this short section the standard terminology and the basic facts from combinatorics and commutative algebra which we will use throughout the paper.

\vspace{2mm}

\noindent {\large \textbf{Combinatorics }}\\
For a natural number $n\ge 1$ we denote by $[n]$ the set $\{1,\ldots,n\}$. By a graph $G$  on $[n]$ we understand  a graph with $n$ vertices without loops or multiple edges. If we do not specify otherwise, we also assume that a graph has no isolated points. We denote by $V(G)$ (respectively $E(G)$) the vertex set (respectively the edge set) of $G$. From now on $G$ will always denote a graph on $[n]$ and we write, when it does not raise confusion, just $V$ for $V(G)$ and $E$ for $E(G)$.

We say that a set $M \subseteq E$ of edges is a {\it matching} of $G$ if any two distinct edges of $M$ have empty intersection. A matching is called {\it maximal} if it has maximal cardinality among all  matchings of $G$. The {\it matching number} of $G$, denoted by $\nu(G)$, is the cardinality of a maximal matching of $G$. A matching $M$ is called {\it perfect} if every vertex in $V$ belongs to an edge in $M$. 
A set of vertices $V' \subseteq V$  is called {\it independent} if $\{v,w\} \notin E$ for any $v, w \in V'$. The set $V'$ is called a {\it point cover} of $G$, if for any $w \in V \setminus V'$ there exists a vertex $v \in V'$  such that $\{v,w\} \in E$. A set $E'\subseteq E$ of edges is said to be {\it pairwise disconnected} if for any $e\neq e'\in E'$ we have that $e\cap e'=\emptyset$ and that there is no edge in $E$ connecting $e$ with $e'$.

A set $S \subseteq V$ is called a \emph{paired-dominating set} of $G$ if $S$ is a point cover of $G$  and if the subgraph induced by $S$ has at least one perfect matching. The minimum cardinality of a paired-dominating set is called the \emph{paired-domination number} of $G$ and is denoted by $\gamma_{~\textup{P}}(G)$.\\

A nonzero function $\aa:V(G)\rightarrow \NN$, is a {\it $k$-cover} of $G$ ($k\in \NN$) if $\aa(i)+\aa(j)\geq k$ whenever $\{i,j\}\in E(G)$. A $k$-cover $\aa$ is {\it decomposable} if $\aa=\bb+\cc$  where $\bb$ is an $h$-cover and $\cc$ is a ($k-h$)-cover; $\aa$ is {\it indecomposable} if it is not decomposable. A $k$-cover $\aa$ is called \emph{basic} if it is not decomposable as a $0$-cover plus a $k$-cover (equivalently if no function $\bb < \aa$ is a $k$-cover). The $1$-covers are also known as vertex covers, and basic $1$-covers are the so-called minimal vertex covers.

We recall that a {\it lattice} on a set $L$ is a pair $\L=(L,\prec)$ such that $\prec$ defines a partial order on $L$ for which every two elements $l,l'\in L$ have a supremum, denoted by $l \vee l'$, and a infimum, denoted by $l \wedge l'$. We say that $\L$ is {\it distributive} if \ $l \vee (l' \wedge l'')= (l \vee l')\wedge (l \vee l'')$.

In \cite{BCV}  the authors defined the following property for graphs, which was then proved in \cite{BV} to be equivalent to $\AG$ being a domain. A graph $G$ is said to have the weak square condition (WSC for short) if for every vertex $v \in V$, there exists an edge $\{v,w\} \in E$ containing it such that 
\[ \left. \begin{array}{r}
\{v,v'\} \in E \\
\{w,w'\} \in E
\end{array}
\right\} 
\Rightarrow
\{v',w'\} \in E.
\]

\vspace{2mm}

\noindent {\large \textbf{Commutative Algebra }}\\
Throughout the paper $K$ will be a field, $S=K[x_1, \ldots ,x_n]$ will denote the polynomial ring with $n$ variables over $K$ and $\mm=(x_1,\ldots ,x_n)$ will be the irrelevant maximal ideal of $S$. The {\it edge ideal} of $G$, denoted by $I(G)$, is the square-free monomial ideal of $S$ 
\[I(G)=(x_ix_j: \{i,j\}\in E(G))\subseteq S.\]
A graph $G$ is called {\it Cohen-Macaulay over $K$} if $S/I(G)$ is a Cohen-Macaulay ring. A graph is called just {\it Cohen-Macaulay} if it is Cohen-Macaulay over any field (equivalently over $\mathbb{Z}$). The {\it cover ideal} of $G$ is the Alexander dual of the edge ideal, and we denote it by $J(G)$. So
\[J(G)=\bigcap_{\{i,j\}\in E(G)}(x_i,x_j).\]

As said in the introduction, in this paper we study the symbolic fiber cone of $J(G)$. To introduce it, we recall the definition of the symbolic Rees algebra of an ideal $I\subseteq S$:
\[R(I)_s=\bigoplus_{k\geq 0}I^{(k)}t^k\subseteq S[t],\]
where $I^{(k)}$ denotes the $k$th symbolic power of $I$; i.e. $I^{(k)}=(I^k S_W) \cap S$, where $W$ is the complement in $S$ of the union of the associated primes of $I$ and $S_W$ denotes the localization of $S$ at the multiplicative system $W$. If $I$ is a square-free monomial ideal then $I^{(k)}$ is just the intersection of the (ordinary) $k$th power of the minimal prime ideals of $I$. 
Therefore
\[(J(G))^{(k)}=\bigcap_{\{i,j\}\in E(G)}(x_i,x_j)^k.\]
The symbolic fiber cone of $I$ is $R(I)_s/\mm R(I)_s$.
We will denote by $\AG$ the symbolic fiber cone of $J(G)$.

There is a more combinatorial way to construct   $\AG$, given by the relation between basic  covers and $J(G)$:
\[J(G)^{(k)}=(x_1^{\aa(1)}\cdots x_n^{\aa(n)}: \aa \mbox{ is a basic $k$-cover}).\]
Thus  $R(J(G))_s=K[x_1^{\aa(1)}\cdots x_n^{\aa(n)}t^k: \aa \mbox{ is a $k$-cover}]\subseteq S[t]$.
For more details on this interpretation of these algebras see \cite{HHT}, in which this symbolic Rees algebra is denoted by $A(G)$. The authors of that paper proved many properties of $A(G)$. First of all they noticed that $A(G)$ is a finitely generated $K$-algebra, since it is generated in degree less than or equal to $2$. Moreover $A(G)$ is a standard graded $S$-algebra  if and only if $G$ is bipartite. They also proved that $A(G)$ is always a Gorenstein normal domain.

Since $\AG = A(G)/ \mm A(G)$, we have that
\[\AG=K \oplus \Big(\bigoplus_{k\geq 1}<x_1^{\aa(1)}\cdots x_n^{\aa(n)}t^k: \aa \mbox{ is a basic $k$-cover}>\Big),\]
where the  multiplication table is given by 
\begin{displaymath}
x_1^{\aa(1)}\cdots x_n^{\aa(n)}t^k \cdot x_1^{\bb(1)}\cdots x_n^{\bb(n)}t^h = \left\{ \begin{array}{ll} x_1^{\cc(1)}\cdots x_n^{\cc(n)}t^{h+k} & \mbox{if $\cc=\aa +\bb$ is a basic $(h+k)$-cover},\\
0 &  \mbox{otherwise. } \end{array}  \right.
\end{displaymath}
With the above presentation it is clear that the Hilbert function of $\AG$ counts the basic $k$-covers of $G$, i.e.
\[\HF(k):=\dim_K (\AG_k)=|\{\mbox{basic $k$-covers of }G\}|.\]
It turns out that the number of basic $2h$-covers of a graph grows as a polynomial in $h$ of degree $\dim \AG - 1$, namely  the Hilbert polynomial $\HP$ of the second Veronese of $\AG$, which is standard graded (see Remark \ref{polynomial}). This simple fact will be the main tool in the characterization of the Krull  dimension of $\AG$ in terms of $G$. 

From the above discussion  it follows that $\AG$ is a standard graded $K$-algebra (equivalently it is the ordinary fiber cone of $J(G)$) if and only if $G$ is bipartite. The graphs for which $\AG$ is a domain have been characterized in  \cite{BCV} in the bipartite case and in \cite{BV} in general. Moreover, if $\AG$ is a domain then it is Cohen-Macaulay, but it may not be Gorenstein. When $G$ is bipartite and  $\AG$ is not a domain the projective scheme defined by $\AG$ is connected, but not necessarily equidimensional, and therefore it may be non-Cohen-Macaulay (for more details see \cite{BCV}).

\section{Koszul Property and ASL structure of $\AG$}

During an informal conversation at Oberwolfach J\"urgen Herzog asked whether $\AG$ is Koszul provided that $G$ is bipartite. In this section we  answer  this question positively, showing even more: if $G$ is bipartite, then $\AG$ has a structure of homogeneous ASL.

Algebras with straightening laws (ASL's for short) were introduced by  De Concini,  Eisenbud and  Procesi in \cite{DCEP}.  
These algebras provide an unified treatment of both algebraic and geometric objects that have a combinatorial nature. For example, the coordinate rings of some classical algebraic varieties (such as determinantal rings and Pfaffian rings) have an ASL structure. For more details on this topic the reader can consult the book of  Bruns and  Vetter \cite{BrVe}. 
First, we will recall the definition of homogeneous ASL on posets.

Let $(P,<)$ be a finite poset and denote by $K[P]$ the polynomial ring whose variables are the elements of $P$. Denote by $I_P$ the following  monomial ideal of $K[P]$:
\[ I_P = ( xy~:~x \textrm{~and~} y \textrm{~are incomparable elements of~}P).\]
\begin{definition}
Let $A = K[P]/I$, where $I$ is a homogeneous ideal with respect to the usual grading. The graded algebra $A$ is called a homogeneous ASL on $P$ if
\begin{compactitem}
\item[(ASL1)] The residue classes of the monomials not in $I_P$ are linearly independent in $A$.
\item[(ASL2)] For every $x,y \in P$ such that $x$ and $y$ are incomparable the ideal $I$ contains a polynomial of the form
\[ xy - \sum\lambda zt\]
with $\lambda \in K$, $z,t,\in P$, $z\le t$, $z<x$ and $z<y$.
\end{compactitem}
\end{definition}
The polynomials in (ASL2) give a way of rewriting in $A$ the product of two incomparable elements. These relations are called the \emph{straightening relations} or straightening laws.

A total order $<'$ on $P$ is called  a linear extension of the poset $(P,<)$ if $x<y$ implies $x<'y$. It is known that if $\tau$ is a revlex term order with respect to a linear extension of $<$, then the polynomials in (ASL2) form a \GB~ of $I$ and $\ini_{\tau}(I)=I_P$. \\

We will prove now that when $G$ is a bipartite graph, $\AG$ has an ASL structure. Let us first fix some notation.
Let $G$ be a bipartite graph with the partition of the vertex set $[n]=A\cup B$ and suppose that $|A|\leq |B|$. We denote by$\P(G)$  the set of basic 1-covers of $G$ and we define on this set the following partial order
\[ \aa \leq \bb \ \ \ \iff \ \ \ \aa(a)\leq \bb(a) \ \ \forall \ \ a\in A. \]
Given two basic 1-covers $\aa$ and $\bb$ of $G$, we can define the following $1$-covers:
\begin{displaymath}
(\aa \tw \bb)(v) = \left\{ \begin{array}{lc} \min\{\aa(v),\bb(v)\} & \mbox{if } v \in A.\\
\max\{\aa(v),\bb(v)\} &  \mbox{if } v\in B; \end{array}  \right.
\end{displaymath}
\begin{displaymath}
(\aa \tv\bb)(v) = \left\{ \begin{array}{lc} \max\{\aa(v),\bb(v)\} & \mbox{if } v \in A,\\
\min\{\aa(v),\bb(v)\} &  \mbox{if } v\in B. \end{array}  \right.
\end{displaymath}
Notice that these $1$-covers may be {\it non-basic}. It is easy to check the following equality:
\[ \aa + \bb = \aa \tw \bb + \aa \tv \bb . \]
The above equality translates to a relation among the generators of $\AG$ in the following way. Denote by $R= K[\P(G)]$. In order to simplify notation we will denote by $\aa,\bb,\cc, \ldots$ both the variables of $R$ and the basic 1-covers of $G$. Whenever it will not be clear from the context, we will specify which of the two we are considering. We have the following  natural presentation of $\AG$:

$$ \begin{array}{rcllc} 
\Phi :& R & \longrightarrow & \AG &  \\
&{\aa} & \longmapsto & x_1^{\aa(1)} \cdots x_n^{\aa(n)}t & 
\end{array} $$
For simplicity we set $\aa \tw \bb$ (respectively $\aa \tv \bb$) to be $0$ (as elements of $R$) whenever they are not basic 1-covers.
Using this convention  it is obvious that the polynomial $\aa\bb - (\aa \tw \bb)(\aa \tv \bb)$ belongs to the kernel of $\Phi$ for any pair of basic $1$-covers $\aa$ and $\bb$. \\
The main result of this section is the following theorem.

\begin{thm}\label{ASL}
Let $G$ be a bipartite graph. The algebra $\AG$ has a homogeneous ASL structure on $\P(G)$ over $K$. With the above notation, the straightening relations are
\begin{displaymath}
\Phi(\aa)\Phi(\bb)= \left\{ \begin{array}{ll}  \Phi(\aa \tw \bb)\Phi(\aa \tv \bb)& \mbox{if both $\aa \tw \bb$ and $\aa \tv \bb$ are basic $1$-covers}, \\
0 & \mbox{otherwise;} \end{array} \right. 
\end{displaymath} 
for any $\aa$ and $\bb$ incomparable basic $1$-covers. 
In particular we have
\[ \ker \Phi = (\aa\bb - (\aa \tw \bb)(\aa \tv \bb)~:~ \aa \mbox{ and $\bb$ are incomparable basic $1$-covers}). \]
\end{thm}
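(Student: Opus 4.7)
The plan has three main steps: (i) verify the proposed straightening relations lie in $\ker \Phi$ and satisfy the (ASL2) structural conditions, (ii) establish (ASL1), and (iii) deduce the description of $\ker \Phi$.

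For step (i), I would first check that $\aa \tw \bb$ and $\aa \tv \bb$ are indeed $1$-covers by a short case analysis on each edge, using that the bipartite structure forces the $\min$ on $A$ to be compensated by the $\max$ on $B$; the pointwise identity $\aa + \bb = \aa \tw \bb + \aa \tv \bb$ is just $\min(x,y) + \max(x,y) = x+y$. In $\AG$, the product $\Phi(\aa)\Phi(\bb)$ equals $x_1^{(\aa+\bb)(1)}\cdots x_n^{(\aa+\bb)(n)}\,t^2$ when $\aa+\bb$ is a basic $2$-cover and vanishes otherwise, and the same for $\Phi(\aa \tw \bb)\Phi(\aa \tv \bb)$. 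When both $\aa \tw \bb$ and $\aa \tv \bb$ are basic the two products agree because their underlying sums coincide; when, say, $\aa \tw \bb$ fails to be basic, I pick a $1$-cover $\delta \lneq \aa \tw \bb$, observe that $\delta + \aa \tv \bb$ is a $2$-cover strictly smaller than $\aa \tw \bb + \aa \tv \bb = \aa + \bb$, and conclude that $\aa + \bb$ is not basic, so $\Phi(\aa)\Phi(\bb) = 0$, matching the proposed ``$0$'' case. The (ASL2) requirements $z \leq w$, $z < \aa$, $z < \bb$ with $z = \aa \tw \bb$ and $w = \aa \tv \bb$ then follow from $\min \leq \max$ and from the existence, guaranteed by incomparability, of vertices in $A$ where $\aa$ and $\bb$ disagree strictly.

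For step (ii), I plan to show that for every chain $\aa_1 \leq \cdots \leq \aa_k$ in $\P(G)$ the product $\Phi(\aa_1) \cdots \Phi(\aa_k)$ equals the nonzero monomial $x_1^{(\aa_1+\cdots+\aa_k)(1)}\cdots x_n^{(\aa_1+\cdots+\aa_k)(n)}\,t^k$, and that different chains give different exponent vectors. Distinctness is the easy half: the values of $\sum \aa_i$ on $A$ record, at each $a$, the count $\#\{i : \aa_i(a) = 1\}$; monotonicity then recovers each $\aa_i|_A$; and bipartite basic $1$-covers are determined by their restriction to $A$ through the identity $V \cap B = N(A \setminus V)$ satisfied by any minimal vertex cover $V$.

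The nonvanishing claim that $\aa_1 + \cdots + \aa_k$ is itself a basic $k$-cover is the expected main obstacle. My approach exploits the fact that the poset order on $A$ reverses to the pointwise order on $B$: setting $j_a = \#\{i : \aa_i(a) = 0\}$ for $a \in A$ and $l_b = \#\{i : \aa_i(b) = 1\}$ for $b \in B$, the fact that each $\aa_i$ is a $1$-cover translates into the inequality $l_b \geq j_a$ on every edge $\{a, b\}$, while the minimality of the individual $\aa_i$ forces this inequality to be an equality at suitable neighbors (of any $a$ with $j_a < k$ and of any $b$ with $l_b > 0$). A hypothetical $k$-cover $\gamma \lneq \sum \aa_i$ strict at some vertex then violates the edge constraint at such a distinguished pair, giving a contradiction. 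Once (ASL1) is secured, step (iii) is formal: iterated straightening rewrites every monomial of $R$ as a $K$-combination of chain monomials, and (ASL1) ensures this rewriting is faithful in $\AG$, so $\ker \Phi$ coincides with the ideal generated by the straightening relations.
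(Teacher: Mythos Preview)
Your proposal is correct and follows the same approach as the paper: verify (ASL2) via the pointwise identity $\aa+\bb=\aa\tw\bb+\aa\tv\bb$, and establish (ASL1) by showing that chain monomials map to nonzero, pairwise distinct monomials in $\AG$. The paper packages (ASL1) as a separate lemma proved by induction on the number of terms---its base case is your nonvanishing argument (phrased as a contradiction at a single vertex rather than through your counts $j_a,l_b$), and its inductive step separates terms by the degree in $x_{v_0}$ at a vertex where two chains differ---whereas your direct reconstruction of the chain from $(\sum_i\aa_i)|_A$ shortcuts that induction; the description of $\ker\Phi$ is obtained in the paper by citing \cite[Proposition 4.2]{BrVe}.
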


Before proving Theorem \ref{ASL}, we will prove the following lemma.

\begin{lemma}\label{standard}
Let $G$ be a bipartite graph. For every $i=1,...,k$ let $\aa_1^i \leq \ldots \leq \aa_d^i$ be (distinct) $d$-multi-chains of basic $1$-covers, and set $U_i=\aa_1^i\cdots \aa_d^i \in R$.
If a linear combination with coefficients in $K$, say  $F = \lambda_1 U_1+\ldots + \lambda_k U_k$,  belongs to the kernel of $\Phi$, then $\lambda_1=\ldots =\lambda_k=0$. 
\end{lemma}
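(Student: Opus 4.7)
The plan is to show that the elements $\Phi(U_1),\ldots,\Phi(U_k)$ are pairwise distinct and nonzero in $\AG$. Since $\AG$ has as $K$-basis the monomials $x_1^{\cc(1)}\cdots x_n^{\cc(n)}t^k$ for $\cc$ a basic $k$-cover (as recalled in the preliminaries), distinctness and non-vanishing immediately force all $\lambda_i = 0$.

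Step 1 (non-vanishing). I would first show that for every multichain $\aa_1 \le \cdots \le \aa_d$ of basic $1$-covers, the sum $\sigma := \aa_1 + \cdots + \aa_d$ is a \emph{basic} $d$-cover. Set $C_j = \aa_j^{-1}(1)$. Each $C_j$ is a minimal vertex cover, and a standard consequence of bipartiteness is that $C_j$ is determined by $C_j \cap A$ via $C_j \cap B = \{b \in B : N(b) \not\subseteq C_j \cap A\}$. Hence the chain $C_1 \cap A \subseteq \cdots \subseteq C_d \cap A$ forces the reverse chain $C_1 \cap B \supseteq \cdots \supseteq C_d \cap B$. For $a \in A$ with $\sigma(a) > 0$, let $j_0$ be the least index with $a \in C_{j_0}$, so $\sigma(a) = d - j_0 + 1$. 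Minimality of $C_{j_0}$ yields $b \in N(a)$ with $b \notin C_{j_0}$; the decreasing $B$-chain then gives $b \notin C_j$ for all $j \ge j_0$, while for $j < j_0$ the condition $a \notin C_j$ forces $b \in C_j$, since $C_j$ must cover $\{a,b\}$. Therefore $\sigma(b) = j_0 - 1$ and $\sigma(a) + \sigma(b) = d$; a symmetric argument handles $b \in B$ with $\sigma(b) > 0$. Applying the same conclusion to the initial sub-multichains shows that every partial sum is basic, so no intermediate product in $\Phi(U_i) = \Phi(\aa_1^i)\cdots\Phi(\aa_d^i)$ vanishes, and $\Phi(U_i) = x_1^{\sigma_i(1)} \cdots x_n^{\sigma_i(n)} t^d \ne 0$.

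Step 2 (injectivity). Next I show that $\sigma$ determines the multichain. From the increasing structure on $A$ one recovers $C_j \cap A = \{a \in A : \sigma(a) \ge d - j + 1\}$; the bipartite characterisation above then recovers $C_j \cap B$, hence each $\aa_j$. So distinct multichains give distinct $\sigma_i$, the $\Phi(U_i)$ are distinct basis monomials of $\AG$, and linear independence follows.

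The only real obstacle I anticipate is Step 1. The key input is the bipartite fact that a minimal vertex cover is determined by its trace on $A$; this produces the opposite monotonicities of $(C_j \cap A)_j$ and $(C_j \cap B)_j$ which supply the crucial neighbour $b$ of $a$ witnessing $\sigma(a) + \sigma(b) = d$. Everything in Step 2 is then essentially a bookkeeping consequence of this same monotonicity phenomenon.
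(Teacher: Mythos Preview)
Your proof is correct. Both arguments rest on the same bipartite observation---that the chain condition forces $C_j\cap A$ to increase and $C_j\cap B$ to decrease, which furnishes the neighbour witnessing $\sigma(v)+\sigma(w)=d$---and your Step~1 is essentially the direct form of the paper's base case $k=1$ (which the paper phrases by contradiction). The genuine difference is in how you handle several multichains at once. The paper proceeds by induction on $k$: it locates an index $j$ and a vertex $v_0$ on which two of the multichains disagree, partitions $[k]$ accordingly, and separates the two groups by comparing degrees in the variable $x_{v_0}$, thereby reducing to fewer multichains. You instead prove directly that the map \emph{(multichain)} $\mapsto$ \emph{(sum)} is injective, recovering $C_j\cap A=\{a\in A:\sigma(a)\ge d-j+1\}$ and then $C_j\cap B$ from the bipartite description of minimal covers; distinctness of the $\Phi(U_i)$ then follows, and you conclude from the explicit $K$-basis of $\AG$ given in the preliminaries. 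Your route is arguably cleaner: once Step~1 is done, the injectivity in Step~2 is almost automatic bookkeeping, whereas the paper's inductive splitting requires an extra degree argument. (A minor remark: your appeal to initial sub-multichains in Step~1 is harmless but unnecessary---if the total sum is basic then every sub-sum is automatically basic, since adding a further $1$-cover to a non-basic sum keeps it non-basic.)
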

\begin{proof}
We will prove the lemma by induction on $k$.

Let $k=1$. 
Denote $\aa_j^1$ by $\aa_j$ and $U_1$ by $U$. Suppose $\lambda U \in \ker \Phi$, with $0 \neq \lambda \in K$. As $\Phi(\lambda U) = \lambda \Phi(U)$, this implies that $\Phi(U)=0$. In other words  the $d$-cover $\cc$ that associates to a vertex $v$ the value $\cc(v)=\aa_1(v)+\ldots +\aa_d(v)$ is non-basic. So there exists a vertex $v_0$ of $G$ such that $\gamma(v_0)+\gamma(w)>d$ for any $w$ adjacent to $v_0$. We may  assume that $v_0\in A$ (otherwise the issue is symmetric).\\
 Set  $q=\min \{i=1,\ldots ,d: \aa_i(v_0)=1\}$; if $\aa_i(v_0)=0$ for any $i$ we set $q=d+1$.
Since $\aa_q$ is a basic $1$-cover, there exists a vertex $w_0$ adjacent to $v_0$ such that $\aa_q(v_0)+\aa_q(w_0)=1$. As $\aa_1 \leq \ldots \leq \aa_d$, we have $\aa_i(v_0)=0$ for any $i\leq q$, and $\aa_j(w_0)=0$ for any $j\geq q$ (because $w_0\in B$). This implies that 
\[ \gamma(v_0)+\gamma(w_0)=\sum_{i=q}^d \aa_i(v_0)+ \sum_{j=1}^{q-1}\aa_j(w_0) = (d-q+1)+(q-1)=d, \]
a contradiction.

Let $k > 1$. Since the multi-chains are distinct, there exists an index $j=1,\ldots ,d$ such that $\aa_j^r \neq \aa_j^s$ for some $r \neq s$. So there exists a vertex $v_0$ of $G$ and a partition of $[k] = \mathcal{I} \cup ( [k] \setminus \mathcal{I})$, with  $\emptyset \neq \mathcal{I} \neq [k]$,
such that 
\[
\aa_j^i(v_0)= \left\{ \begin{array}{ll} 0 & \textrm{~if~~} i \in \mathcal{I}\\
1 & \textrm{~if~~} i \in [k]\setminus \mathcal{I}. \end{array} \right. 
\]
We can again assume that $v_0\in A$ and, up to a relabeling, that $v_0=1\in [n]$. 
Since we are dealing with chains, we have that $\aa_s^i(1)=0$ for every $i\in \mathcal{I}$,and $s\leq j$. For the multi-chains indexed by $[k]\setminus \mathcal{I}$ we have $\aa_t^h(1)=1$ for every $h\in[k]\setminus \mathcal{I}$, and $t\geq j$. This means that we can rewrite the equation $\Phi(F)=0$ as
\[ x_1^{d-j+1}\displaystyle \left( \sum_{h\in[k]\setminus \mathcal{I}}\lambda_h \frac{\Phi(U_h)}{x_1^{d-j+1}}\right)  \ + \ \sum_{i \in  \mathcal{I}}\lambda_i \Phi(U_i) \ = \ 0 , \]
where $\sum_{i \in \mathcal{I}}\lambda_i \Phi(U_i)$ has at most degree $d-j$, with respect to $x_1$. This implies that 
\[ \sum_{h\in [k]\setminus \mathcal{I}}\lambda_h \Phi(U_h)=\sum_{i\in  \mathcal{I}}\lambda_i \Phi(U_i) = 0,\]
so we can conclude by induction.
\end{proof}

\begin{proof}[Proof of Theorem \ref{ASL}]
 We have seen that $\AG = R/ \ker\Phi$.
 Because $G$ is bipartite, the graded $K$-algebra $\AG$ is generated by the elements $x^{\aa}$, with $\aa$  a basic $1$-cover. Moreover the degree of $x^{\aa}$ is $1$ if $\aa$ is a basic $1$-cover. So  $\ker\Phi$ is homogeneous with respect to the standard grading of $R$ . We need to see now that (ASL1) and (ASL2) are satisfied.

The first condition  follows by Lemma \ref{standard}. From the discussion preceding  Theorem \ref{ASL} we get that the polynomials  $\aa\bb - (\aa \tw \bb)(\aa \tv \bb)$ belong to $\ker\Phi$. It is easy to see that by construction $(\aa \tw \bb) \le (\aa \tv \bb)$, $(\aa \tw \bb) < \aa$ and $(\aa \tw \bb) < \bb$ hold whenever  $ \aa \tw \bb$ and $\aa \tv \bb$ are basic 1-covers. So (ASL2) holds as well.
The last part of the statement  follows immediately from \cite[Proposition 4.2]{BrVe}.
\end{proof}
As we said in the beginning of this section, the homogeneous ASL structure of $\AG$ implies that the straightening relations form a quadratic  \GB. This implies the following corollary.
\begin{corollary}\label{koszul}
If $G$ is a bipartite graph, then $\AG$ is a Koszul algebra.
\end{corollary}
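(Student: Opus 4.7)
The plan is to assemble three ingredients that are already essentially in place: the ASL presentation just proved, a standard fact about Gröbner bases of ASLs, and the Fröberg--Backelin theorem that quadratic Gröbner bases imply Koszulness.

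First I would observe that by Theorem \ref{ASL} we have $\AG \cong R/\ker\Phi$, where $R = K[\P(G)]$ is a standard graded polynomial ring (this is where bipartiteness is used, since $\AG$ is standard graded precisely when $G$ is bipartite), and $\ker\Phi$ is generated by the straightening relations $\aa\bb - (\aa \tw \bb)(\aa \tv \bb)$ for incomparable pairs $\aa,\bb \in \P(G)$. Each such relation is a homogeneous polynomial of degree exactly $2$, so in particular $\AG$ is defined by quadrics.

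Next I would invoke the general principle recalled in the paragraph following the definition of a homogeneous ASL: if $\tau$ is the reverse lexicographic term order induced by any linear extension of the poset $(\P(G),\leq)$, then the straightening relations form a Gröbner basis of $\ker\Phi$, and $\ini_{\tau}(\ker\Phi) = I_{\P(G)}$. In particular, $\ker\Phi$ admits a quadratic Gröbner basis.

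Finally, I would apply the classical result (due to Fröberg, see also Backelin--Fröberg) that any standard graded $K$-algebra whose defining ideal admits a quadratic Gröbner basis is Koszul. The usual argument is that $\ini_{\tau}(\ker\Phi) = I_{\P(G)}$ is a squarefree monomial ideal generated in degree $2$, hence $R/\ini_{\tau}(\ker\Phi)$ is Koszul (monomial quadratic ideals define Koszul algebras), and Koszulness is preserved under passing from the initial algebra to the original one via the standard deformation argument on Betti numbers. Applying this to our situation yields that $\AG$ is Koszul, completing the proof. The only step that requires any genuine input beyond Theorem \ref{ASL} is this last citation; no new obstacle arises because the heavy combinatorial work was already done in Lemma \ref{standard} and the verification of (ASL2).
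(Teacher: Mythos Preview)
Your proposal is correct and follows essentially the same route as the paper: deduce from Theorem~\ref{ASL} that the straightening relations form a quadratic Gr\"obner basis (with respect to degrevlex on a linear extension of $\P(G)$), and then invoke the standard fact that a quadratic Gr\"obner basis implies Koszulness. The paper's own argument is the one-sentence version of what you wrote; your additional remarks on why the Fr\"oberg--Backelin criterion applies are accurate but not needed beyond citation.
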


\begin{remark}
Independently and by different methods Rinaldo showed in \cite[Corollary 3.9]{Ri} a particular case of Corollary \ref{koszul}. Namely he proved that $\AG$ is Koszul provided that $G$ is a bipartite graph satisfying the WSC. Actually we will show in Corollary \ref{Hibi} that for such a graph $\AG$ is even a Hibi ring.
\end{remark}

A special class of algebras with straightening laws are the so called Hibi rings. They were constructed in \cite{H} as an example of integral ASLs. The poset that supports their structure is a distributive lattice $\L$ and the straightening relations are given for any two incomparable elements $p, q \in \L$ by
\[ pq - (p\wedge q)(p\vee q),\]
where $p\wedge q$ denotes the infimum and $p \vee q$ the supremum of $p$ and $q$.

We will see that there exists a correspondence between the vertex covers $\aa \tw \bb$ (resp. $\aa \tv \bb$) and the infimum (resp. supremum) of $\aa$ and $\bb$ in the poset $\P(G)$.
\begin{remark}\label{inf}
Let $\aa, \bb \in \P(G)$ be two incomparable basic 1-covers.
\begin{compactitem}
\item[1.]  If $\aa \tw \bb$  is a basic 1-cover then $\aa \tw \bb = \aa \wedge \bb$.
\item[2.] If $\aa \tv \bb$ is a basic 1-cover then $\aa \tv \bb = \aa \vee \bb$.
\end{compactitem}
\end{remark}

\begin{proof}
To prove 1. we have to show that if $\cc \in \P(G)$ with $\cc < \aa$ and $\cc < \bb$ then $ \cc \le  \aa \tw \bb$. This means that $\cc(a) \le \aa(a)$ and $\cc(a) \le \bb(a)$ for every $a \in A$. So $\cc(a) \le \min\{\aa(a),\bb(a)\}$ for every $a \in A$ and we are done. 
The second part is proved analogously.
\end{proof}
Using this remark we are able to prove the following corollary.

\begin{corollary}\label{Hibi}
Let $G$ be a bipartite graph. The following are equivalent:
\begin{compactitem}
\item[1.] $G$ satisfies the WSC;
\item[2.] $\AG$ is a domain;
\item[3.] $\AG$ is a Hibi ring on $\P(G)$ over $K$.
\end{compactitem}
\end{corollary}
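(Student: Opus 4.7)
The plan is to observe that the equivalence $(1) \Leftrightarrow (2)$ is the main result of \cite{BCV} in the bipartite case, so only $(2) \Leftrightarrow (3)$ requires an argument here. The direction $(3) \Rightarrow (2)$ is immediate: Hibi rings were introduced in \cite{H} as integral ASLs, hence are domains by construction. All the real work therefore goes into proving $(2) \Rightarrow (3)$, which I would approach by exploiting the ASL structure already established in Theorem \ref{ASL}.

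For $(2) \Rightarrow (3)$, assume $\AG$ is a domain and let $\aa, \bb \in \P(G)$ be two incomparable basic $1$-covers. Since $\Phi(\aa)$ and $\Phi(\bb)$ are nonzero elements of the domain $\AG$, their product is nonzero; inspecting the multiplication table in Theorem \ref{ASL} this forces both $\aa \tw \bb$ and $\aa \tv \bb$ to be basic $1$-covers. By Remark \ref{inf} we then have $\aa \wedge \bb = \aa \tw \bb$ and $\aa \vee \bb = \aa \tv \bb$, and for comparable pairs the meet and join are trivial, so $\P(G)$ is a lattice. Distributivity follows because $\tw$ and $\tv$ are defined via componentwise minimum on the $A$-coordinates and componentwise maximum on the $B$-coordinates (and the opposite on the other side); hence $\P(G)$ embeds as a sublattice of $\NN^A \times \NN^B$ equipped with the product lattice structure in which the order on the $B$-factor is reversed, an ambient lattice which is distributive because $(\NN, \min, \max)$ is. Sublattices of distributive lattices being distributive, $\P(G)$ is distributive.

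To finish, Theorem \ref{ASL} presents $\AG$ as $K[\P(G)]/\ker\Phi$ with $\ker\Phi$ generated by the polynomials $\aa\bb - (\aa \tw \bb)(\aa \tv \bb)$ for incomparable pairs; under our domain hypothesis these become exactly $\aa\bb - (\aa \wedge \bb)(\aa \vee \bb)$, which are the defining Hibi relations on the distributive lattice $\P(G)$. Hence $\AG$ is a Hibi ring on $\P(G)$ over $K$. The one conceptually nontrivial step is recognizing that, in the domain case, the operations $\tw, \tv$ genuinely coincide with the lattice meet and join on $\P(G)$ and that the resulting lattice is distributive; everything else is an essentially formal consequence of Theorem \ref{ASL} and Remark \ref{inf}.
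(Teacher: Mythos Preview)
Your proof is correct and follows essentially the same route as the paper: cite \cite{BCV} for $(1)\Leftrightarrow(2)$, cite Hibi for $(3)\Rightarrow(2)$, and for $(2)\Rightarrow(3)$ use the domain hypothesis to force $\aa\tw\bb$ and $\aa\tv\bb$ to be basic, invoke Remark \ref{inf} to identify them with the lattice operations, and then read off the Hibi presentation from Theorem \ref{ASL}. The only difference is that you supply an explicit argument for the distributivity of $\P(G)$ (via the componentwise min/max description, realizing $\P(G)$ as a sublattice of a product of chains), whereas the paper simply appeals to \cite[p.~100]{H} together with Theorem \ref{ASL} at that point; your version is slightly more self-contained, but the underlying strategy is identical.
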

\begin{proof}
The equivalence between 1. and 2. was already proved in \cite[Theorem 1.9]{BCV} and we present it here only for completeness. The fact that 3. implies 2.  was proved by  Hibi in the same paper where he introduced these algebras (see \cite[p. 100]{H}). So we only need to prove that 2. implies 3.

When $\AG$ is a domain, for every $\aa, \bb \in \P(G)$ that are incomparable, we must have $\aa\bb =   (\aa \tw \bb)(\aa \tv \bb)$. This means that both   $\aa \tw \bb$ and $\aa \tv \bb$ are basic 1-covers, so by Remark \ref{inf} they coincide with $\aa \wedge \bb$, respectively with $\aa \vee\bb$. In other words the poset $\P(G)$ is a  lattice. So by \cite[p.100]{H} and by Theorem \ref{ASL} we conclude. 
\end{proof}

A classical structure theorem of  Birkhoff \cite[p.59]{B} states that for each distributive lattice $\L$ there exists a unique poset $P$ such that $\L = J(P)$, where $J(P)$ is the set of poset ideals of $P$, ordered by inclusion. By Corollary \ref{Hibi} we have that if a bipartite graph $G$ satisfies the WSC, then the poset of basic 1-covers $\P(G)$ is a distributive lattice. So by Birkhoff's result there exists a unique poset $P_G$ such that $\P(G) = J(P_G)$. We  use now another result of Hibi which describes completely the Gorenstein Hibi rings (see \cite[p.105]{H}) to obtain the following corollary.

\begin{corollary}
Let $G$ be a bipartite graph satisfying the WSC. The following conditions are equivalent:
\begin{compactitem}
\item[1.] $\AG$ is Gorenstein;
\item[2.] the poset $P_G$ defined above is pure.
\end{compactitem}
\end{corollary}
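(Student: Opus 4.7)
The plan is essentially to apply directly a theorem of Hibi on the Gorenstein property of Hibi rings, stitched together with what has already been established in the preceding corollary and remark.

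First, I would observe that since $G$ is bipartite and satisfies the WSC, Corollary \ref{Hibi} applies and identifies $\AG$ with the Hibi ring associated to the distributive lattice $\P(G)$ over $K$. This gives a complete, explicit ASL presentation of $\AG$ in which the straightening relations are of the Hibi form $\aa\bb - (\aa\wedge\bb)(\aa\vee\bb)$. Then, by Birkhoff's structure theorem cited in the paragraph before the corollary, there is a unique finite poset $P_G$ with $\P(G) = J(P_G)$ (poset ideals ordered by inclusion), so $\AG$ is the Hibi ring $R_K[J(P_G)]$.

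The second step is to quote the Gorenstein characterization in \cite[p.105]{H}: if $P$ is a finite poset then the Hibi ring $R_K[J(P)]$ is Gorenstein if and only if $P$ is pure, meaning all its maximal chains have the same length. Specializing to $P = P_G$ yields $(1) \Leftrightarrow (2)$.

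Since both ingredients — the Hibi-ring identification of $\AG$ and Hibi's Gorenstein criterion — are at our disposal, there is no real obstacle; the proof is a two-line citation chain. The only point worth making explicit for the reader is that the distributive lattice appearing in Hibi's theorem is genuinely our $\P(G)$ (not some auxiliary object), and that the poset $P_G$ in the statement is exactly the Birkhoff poset of join-irreducibles of $\P(G)$, so the hypothesis ``$P_G$ is pure'' matches Hibi's hypothesis verbatim.
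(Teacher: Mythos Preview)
Your proposal is correct and follows exactly the approach indicated in the paper: the corollary is stated there as an immediate consequence of Corollary~\ref{Hibi} (identifying $\AG$ with a Hibi ring on $\P(G)=J(P_G)$) together with Hibi's characterization \cite[p.~105]{H} of Gorenstein Hibi rings. The paper gives no further argument, so your two-line citation chain is precisely what is intended.
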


We want to close this section showing some tools to deduce properties of $\AG$ by the combinatorics of $\P(G)$. In particular we will focus on the Cohen-Macaulayness of $\AG$, but one can read off by $\P(G)$ also the dimension, the multiplicity, and the Hilbert series of $\AG$.

The main technique is to consider the ``canonical" initial ideal of the ideal defining $\AG$. 
Let $I$ be the ideal,  which we described above in terms of its  generators, such that $\AG =R/I$ (recall that $R= K[\P(G)])$. Denote by $\mbox{in}(I)$ the initial ideal of it with respect to a degrevlex term order associated to  a linear extension of $(\P(G),<)$. From the results of this section it follows that $\mbox{in}(I)$ is a square-free monomial ideal, so we can associate to it a simplicial complex $\Delta=\Delta(\mbox{in}(I))$. Moreover it is easy to show that $\Delta$ is the ordered complex of $\P(G)$, i.e. its faces are the chains of $\P(G)$. 

\begin{example} \emph{$\AG$ non Cohen-Macaulay}.  
Let $G$ be a path of length $n-1\geq 5$. So $G$ is a graph on $n$ vertices with edges:
\[ \{1,2\},\ \{2,3\}, \ \ldots , \ \{n-1,n\}. \]
For any $i=1,\ldots , \lfloor n/2 \rfloor$ define the basic $1$-cover
\begin{displaymath}
\aa_i(j) = \left\{ \begin{array}{cl} 1 &~~~\mbox{if \ } j=2k \mbox{ \ and \ } k\leq i, \\
 1 &~~~\mbox{if \ $j=2k-1$ \ and \ }k>i, \\
 0 &~~~\textrm{otherwise}. \end{array}  \right.
\end{displaymath}
Then define also the basic $1$-cover
\begin{displaymath}
\bb(j) = \left\{ \begin{array}{cl} 1 &~~~\mbox{if } j=1,3 \textrm{~~or~~} j = 2k, \textrm{~with~} k \ge 2,  \\
0 &~~~\textrm{otherwise}. \end{array}  \right.
\end{displaymath}
It is straightforward to verify that $\aa_1\leq \aa_2\leq \aa_3\leq \ldots \leq \aa_{\lfloor n/2 \rfloor}$ and $\bb \leq$ $ \aa_3\leq $ $ \ldots \leq $ $ \aa_{\lfloor n/2 \rfloor}$ are maximal chains of $\P(G)$. So $\P(G)$ is not pure. Therefore the ordered complex of $\P(G)$ is not pure. So $\AG$ is not an equidimensional ring by \cite[Corollary 1]{KS}. In particular, if $G$ is a path of length at least $5$, $\AG$ is not Cohen-Macaulay.
\end{example}

Before stating the following result we recall some notion regarding posets. A poset $P$ is \emph{bounded} if it has a least and a greatest element. An element $x\in P$ \emph{covers} $y\in P$ if $y\leq x$ and there not exists $z\in P$ with $y<z<x$. The poset $P$ is said to be \emph{locally upper semimodular} if whenever $v_1$  and $v_2$ cover $u$ and $v_1,v_2 < v$ for some $v$ in $P$, then there exists $t\in P$, $t\leq v$, which covers $v_1$ and $v_2$.

\begin{thm}\label{loc}
Let $G$ be a bipartite graph and $A\cup B$ a bipartition of the vertex set with $|A|\leq |B|$. Moreover, let $\Delta$ be the ordered complex of $\P(G)$. If $\rank(\P(G))=|A|$, then the following are equivalent:
\begin{compactenum}
\item $\AG$ is equidimensional;
\item $\P(G)$ is a pure poset;
\item $\Delta$ is shellable;
\item $\AG$ is Cohen-Macaulay.
\end{compactenum}
\end{thm}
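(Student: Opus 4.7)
The plan is to prove the cycle $(3) \Rightarrow (4) \Rightarrow (1) \Rightarrow (2) \Rightarrow (3)$. The first two steps are formal: shellability of $\Delta$ gives Cohen--Macaulayness of the Stanley--Reisner ring $R/\ini(I) = K[\Delta]$, which ascends to $\AG = R/I$ because depth does not drop when passing from an initial ideal back to $I$; and Cohen--Macaulay quotients are always equidimensional. For $(1) \Rightarrow (2)$ I would invoke \cite{KS} to pass equidimensionality from $\AG$ to $K[\Delta]$, so $\Delta$ is pure; since the facets of $\Delta$ are the maximal chains of $\P(G)$, this is exactly purity of $\P(G)$.

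The substantive step is $(2) \Rightarrow (3)$. The key combinatorial observation is that a basic $1$-cover $\aa$ is determined by the set $T_\aa = \{a \in A : \aa(a) = 0\}$: basicness forces $\aa(b) = 1$ precisely when $b \in B$ has a neighbor in $A \setminus T_\aa$ (using that $G$ has no isolated vertices). Under the order on $\P(G)$, $\aa \leq \bb$ if and only if $T_\bb \subseteq T_\aa$, so $\P(G)$ is bounded, with minimum corresponding to $T=A$ and maximum corresponding to $T=\emptyset$. The hypothesis $\rank \P(G) = |A|$ together with purity forces the rank of $\aa \in \P(G)$ to equal $|A| - |T_\aa|$: the lengths of any maximal chain from the minimum up to $\aa$ and from $\aa$ up to the maximum are bounded above by $|A| - |T_\aa|$ and $|T_\aa|$ respectively (each covering relation decreases $|T|$ by at least one), and they must sum to $|A|$. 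Consequently, every covering relation $u \lessdot v$ removes exactly one element from~$T$.

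I would then verify that $\P(G)$ is locally upper semimodular. Given $u \lessdot v_1, v_2$ with both below some $v \in \P(G)$, write $T_{v_i} = T_u \setminus \{a_i\}$ and set $T_t := T_u \setminus \{a_1, a_2\}$. The function associated to $T_t$ is a basic $1$-cover: each $A$-vertex outside $T_u$ inherits its basicness witness from $u$, while $a_1$ and $a_2$ inherit theirs from $v_1$ and $v_2$ respectively, where one uses the monotonicity $N(T_t) \subseteq N(T_{v_i}) \subseteq N(T_u)$ to transport witnesses down to $T_t$. The assumptions $v_1, v_2 \leq v$ force $a_1, a_2 \notin T_v$, hence $T_v \subseteq T_t$ and $t \leq v$; by purity, $t$ sits exactly one rank above $v_1$ and $v_2$, so $t$ covers both.

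To conclude I would apply the theorem of Bj\"orner that a finite bounded pure locally upper semimodular poset admits a (CL-)shelling of its order complex, giving shellability of $\Delta$ and closing the cycle. The main obstacle is the verification of local upper semimodularity, specifically the check that the $T$-set obtained by merging the two covering differences defines a basic $1$-cover; the monotonicity of $N(\cdot)$ under inclusion is the clean fact that makes all basicness witnesses transfer simultaneously.
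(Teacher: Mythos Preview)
Your proposal is correct and follows essentially the same route as the paper: the cycle $(3)\Rightarrow(4)\Rightarrow(1)\Rightarrow(2)\Rightarrow(3)$, with the substantive step $(2)\Rightarrow(3)$ handled via Bj\"orner's criterion by showing $\P(G)$ is bounded and locally upper semimodular. Your element $t$ coincides with the paper's $\delta$, which is $\aa\tv\bb$ made basic by lowering only $B$-values; your verification that the $A$-side is already basic is exactly the content of the paper's claim that only $B$-values need adjusting, and you are in fact more explicit about it (and about $t\le v$) than the paper.

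One slip to fix: you wrote that ``$\aa(b)=1$ precisely when $b$ has a neighbor in $A\setminus T_\aa$'', but the correct statement is that $\aa(b)=1$ precisely when $b$ has a neighbor in $T_\aa$ (a neighbor where $\aa$ vanishes forces $\aa(b)=1$, and absence of such a neighbor forces $\aa(b)=0$ by basicness). Your subsequent use of the monotonicity $N(T_t)\subseteq N(T_{v_i})\subseteq N(T_u)$ and the transfer of basicness witnesses is consistent with the correct formula, so the argument goes through once this is corrected.
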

\begin{proof}
\mbox{$4.\Rightarrow1.$} is well known. 
As the Cohen-Macaulayness of $R/\mbox{in}(I)$ implies the Cohen-Macaulayness of $R/I\cong \AG$, 
\mbox{$3.\Rightarrow4.$}  is also true. 
\mbox{$1.\Rightarrow2.$}  follows by \cite[Corollary 1]{KS}.

\mbox{$2.\Rightarrow3.$}  To prove that $\Delta$ is shellable we will use a result of Bj\"orner (see \cite[Theorem 6.1]{Bj}), stating  that it is enough to show  that $\P(G)$ is a bounded locally upper semimodular poset. The poset $\P(G)$ is obviously  bounded,
so let $\aa$ and $\bb$ be two elements of $\P(G)$ which cover $\cc$. 
The fact that   $\rank(\P(G))=|A|$ 
 together with the pureness of $\P(G)$, imply that for a basic $1$-cover $\xi$ we have $\rank(\xi)=\sum_{v\in A}\xi(v)$. If $\aa$ and $\bb$ cover $\gamma$, since all the unrefinable chains between two incomparable elements of a bounded pure complex have the same length, it follows that $s=\rank(\aa)=\rank(\bb)=\rank(\cc)+1$. But $\cc(v)\leq \min \{\aa(v),\bb(v)\}$, for each $v\in A$, so if we look at the rank of the elements involved we obtain $\cc(v)=\min \{\aa(v),\bb(v)\}$ for all $v\in A$. 
  Consider the (non necessarily basic)  $1$-cover, defined at the beginning of this section: $\aa \tv \bb$. It is easy to see that, to make it basic, we can reduce its value at some vertex in $B$, and not in $A$. Let $\delta$ be the basic $1$-cover obtained  from $\aa \tv \bb$. Then 
\[ \rank(\delta)=\sum_{v\in A}\delta(v)=\sum_{v\in A}(\aa \tv \bb)(v)=s+1, \]
which implies that $\delta$ covers $\aa$ and $\bb$. 
\end{proof}
We will see in the next section, that the dimension of $\AG$ is equal to a combinatorial invariant of the graph $G$, called graphical dimension ($\gdim$). In particular we will have that $\rank(\P(G)) = \gdim(G) -1$, so the hypothesis of the theorem regards just the combinatorics of the graph.\\

We showed in \cite{BCV} that $\AG$ domain implies $\AG$ Cohen-Macaulay. Given the above example and theorem it is natural to ask the following questions: ``Can $\AG$ be Cohen-Macaulay and not a domain?". ``Are there examples of graphs for which $\P(G)$ is pure but $\AG$ is not Cohen-Macaulay?". Both answers are  positive and they are provided by the following examples.

\begin{example}
\begin{compactenum}
\item \emph{$\AG$ Cohen-Macaulay but not domain.}
Consider the graph $G$ on seven  vertices below:

\[
\setlength{\unitlength}{1mm}
\begin{picture}(100,50)
\multiput(0,11.5)(10,0){4}{\circle*{1}}
\multiput(0,31.5)(10,0){3}{\circle*{1}}

\multiput(0,31.5)(20,0){2}{\line(1,-2){10}}
\put(0,31.5){\line(3,-2){30}}
\put(0,31.5){\line(1,-1){20}}
\put(10,31.5){\line(1,-1){20}}
\linethickness{0.4mm}
\multiput(0,11.5)(10,0){3}{\line(0,1){20}}

{\small
\put(-10,20){$G:$}
\put(40,20){$\P(G):$}
\put(65,5){\fbox{000}}
\put(65,15){\fbox{100}}

\put(55,25){\fbox{110}}
\put(75,25){\fbox{101}}
\put(65,35){\fbox{111}}

\put(-2.4,7){4}
\put(9.2,7){5}
\put(19.2,7){6}
\put(29.2,7){7}
\put(-1,33){1}
\put(9.2,33){2}
\put(19.2,33){3}
}

\put(69,8.8){\line(0,1){5}}
\put(69,18.8){\line(-2,1){10}}
\put(69,18.8){\line(2,1){10}}
\put(69,33.83){\line(2,-1){10}}
\put(69,33.83){\line(-2,-1){10}}
 \end{picture}
\]

The vertical edges are the only right edges of $G$, so as $7$ is not contained in any of them,  $G$ does not satisfy the WSC. We order the basic 1-covers component-wise  with respect to the values they take on the vertex set \{1,2,3\}. It is clear from the Hasse diagram above that  $\P(G)$  is pure. Moreover $\rank(\P(G))=3$, so Theorem \ref{loc} implies that $\AG$ is Cohen-Macaulay.\\
\item \emph{$\P(G)$ pure but $\AG$ not Cohen-Macaulay.}
Consider the graph $G$ in the picture below. It is not difficult to see that it has only six basic 1-covers. On the right you can see the Hasse diagram of the poset $\P(G)$. The values written next to the vertices represent the basic 1-cover written in bold on the right. Notice that the partial order is defined component-wise with respect to the values taken on the ``upper" vertices of $G$. 
\[
\setlength{\unitlength}{1mm}
\begin{picture}(100,50)
\multiput(0,11.5)(10,0){4}{\circle*{1}}
\multiput(0,31.5)(10,0){4}{\circle*{1}}

\multiput(0,11.5)(10,0){3}{\line(0,1){20}}
\multiput(0,31.5)(10,0){3}{\line(1,-2){10}}
\multiput(0,31.5)(10,0){2}{\line(1,-1){20}}
\put(0,11.5){\line(3,2){30}}
\put(10,11.5){\line(1,1){20}}

{\small
\put(-10,20){$G:$}
\put(40,20){$\P(G):$}
\put(65,5){\fbox{0000}}
\put(53.7,15){{\fbox{$\bold{0110}$}}}

\put(75,15){\fbox{1001}}
\put(55,25){\fbox{1110}}
\put(75,25){\fbox{1101}}
\put(65,35){\fbox{1111}}

\put(-2.4,7){\phantom{(}1}
\put(9.2,7){1}
\put(19.2,7){1}
\put(29.2,7){0}
{\large\put(-1.1,33){$\bold{0}$}
\put(8.8,33){$\bold{1}$}
\put(18.8,33){$\bold{1}$}
\put(28.9,33){$\bold{0}$}
}
}
\put(70,8.8){\line(2,1){10}}
\put(70,8.8){\line(-2,1){10}}
\put(60,18.8){\line(0,1){5}}
\put(80,18.8){\line(0,1){5}}
\put(70,33.83){\line(2,-1){10}}
\put(70,33.83){\line(-2,-1){10}}
 \end{picture}
\]
The poset $\P(G)$ is pure, but the ordered complex of it is not strongly connected. Then $I$ has an initial ideal not connected in codimension $1$, so  \cite[Corollary 2.13]{V} implies that $\AG$ is not Cohen-Macaulay.
\end{compactenum}
\end{example}

\section{The Krull Dimension of $\AG$}

In this section we will extend the notion of graphical dimension, introduced for bipartite graphs in \cite{BCV}, to general graphs.
In \cite{BCV}, together with Benedetti, we conjectured that for a bipartite graph, the Krull dimension of $\AG$ is equal to the graphical dimension of $G$. We will prove that this is true not only in the case of bipartite graphs, but for any graph $G$, considering the extension of the graphical dimension  given in Definition \ref{gdim}. As consequences of this result we are able to give interesting upper bounds for the arithmetical rank of monomial ideals of pure codimension $2$ in $S_{\mm}$, refining  in this case an upper bound given in \cite{Ly1}.\\

For a graph $G$, let $\{a_1, \ldots ,a_r\}=A \subseteq V$ be a nonempty set of independent vertices. We say that $A $ is a \emph{free parameter} set if 
there exists a set of vertices $B = \{b_1, \ldots, b_r\} \subseteq V$,  with $A \cap B = \emptyset$  and the property that:
\begin{compactitem}
\item[-] $\{a_i,b_i\} \in E$ for all $1 \le i \le r$,
\item[-] $\{a_i,b_j\} \in E$ implies $i\le j$.
\end{compactitem}
We will call such a set $B$ a \emph{partner set} of $A$.
\begin{definition}\label{gdim}
Let $G$ be a graph. We define the \emph{graphical dimension} of $G$ as:
\[\gdim(G) := \max\{ |A| ~:~ A \subseteq V \textup{~ is a free parameter set}\} +1.\]
\end{definition}
\begin{remark}
\begin{compactenum}
\item Being free parameter set depends on the labeling of the vertices in both $A$ and $B$.  
\item In the case of bipartite graphs it is not difficult to verify that  this definition coincides with the definition given in \cite{BCV}.
\item In general, $B$ may not be a set of independent vertices (i.e. there may be edges connecting two vertices of $B$).
\end{compactenum}
\end{remark}
The graphical dimension of a graph is not always easy to compute and we were not able to express it in terms of classical invariants of graphs in general. In the following example we will see that the graphical dimension does not depend on the local degree of the vertices. By local degree of a vertex we understand the number of edges incident in that vertex. 
\begin{example}
Let $G$ and $G'$ be the bipartite graphs  represented below.
If $V(G) = A \cup B$ and $V(G') = A' \cup B'$  it turns out that all four sets have two vertices of local degree 2 and two vertices of local degree 3. However, we have $\gdim(G) = 2$ and $\gdim(G') =3$.
\[
\setlength{\unitlength}{2mm}
\begin{picture}(60,20)
\put(0, 9.5){$G : $}
\put(7.5,10){\circle*{0.75}}  
\put(12.5,10){\circle*{0.75}}
\put(15,5){\circle{0.75}}
\put(15,15){\circle{0.75}}
\put(20,5){\circle*{0.75}}
\put(20,15){\circle*{0.75}}
\put(22.5,10){\circle{0.75}}
\put(27.5,10){\circle{0.75}}

\put (7.5,10){\line(3,-2){7.22}}
\put (7.5,10){\line(3,2){7.22}}
\put (12.5,10){\line(1,-2){2.33}}
\put (12.5,10){\line(1,2){2.33}}
\put (20,5){\line(-1,0){4.625}}
\put (20,5){\line(1,2){2.35}}
\put (20,5){\line(3,2){7.23}}
\put (20,15){\line(-1,0){4.625}}
\put (20,15){\line(1,-2){2.35}}
\put (20,15){\line(3,-2){7.23}}

\put(40,5){\circle{0.75}}
\put(45,5){\circle{0.75}}
\put(50,5){\circle{0.75}}
\put(55,5){\circle{0.75}}
\put(40,15){\circle*{0.75}}
\put(45,15){\circle*{0.75}}
\put(50,15){\circle*{0.75}}
\put(55,15){\circle*{0.75}}

\put(40,15){\line(0,-1){9.625}}
\put(40,15){\line(1,-2){4.8}}
\put(40,15){\line(1,-1){9.7}}
\put(45,15){\line(0,-1){9.625}}
\put(45,15){\line(1,-2){4.8}}
\put(45,15){\line(1,-1){9.7}}
\put(50,15){\line(1,-2){4.8}}
\put(50,15){\line(-1,-1){9.7}}
\put(55,15){\line(0,-1){9.625}}
\put(55,15){\line(-3,-2){14.625}}

{\footnotesize
\put(6,9.5){3}
\put(11,9.5){4}
\put(19.5,16){1}
\put(19.5,3){2}
\put(14.5,16){$a$}
\put(14.5,3){$d$}
\put(24,9.5){$b$}
\put(29,9.5){$c$}

\put(49.5,16){3}
\put(54.5,16){4}
\put(39.5,16){1}
\put(44.5,16){2}
\put(39.5,3){$a$}
\put(44.5,3){$b$}
\put(49.5,3){$c$}
\put(54.5,3){$d$}
}

\end{picture}
\]

\[
\setlength{\unitlength}{2mm}
\begin{picture}(60,20)
\put(0, 9.5){$G' : $}
\put(12.5,20){\circle*{0.75}}  
\put(12.5,10){\circle{0.75}}
\put(15,5){\circle*{0.75}}
\put(15,15){\circle*{0.75}}
\put(20,5){\circle{0.75}}
\put(20,15){\circle{0.75}}
\put(22.5,10){\circle*{0.75}}
\put(22.5,20){\circle{0.75}}

\put (12.5,20){\line(0,-1){9.625}}
\put (12.5,20){\line(3,-2){7.15}}
\put (12.5,20){\line(1,0){9.625}}
\put (15,5){\line(1,0){4.625}}
\put (15,5){\line(-1,2){2.35}}
\put (15,15){\line(-1,-2){2.35}}
\put (15,15){\line(1,0){4.625}}
\put (22.5,10){\line(-1,2){2.35}}
\put (22.5,10){\line(-1,-2){2.35}}
\put(22.5,10){\line(0,1){9.625}}

\put(40,5){\circle{0.75}}
\put(45,5){\circle{0.75}}
\put(50,5){\circle{0.75}}
\put(55,5){\circle{0.75}}
\put(40,15){\circle*{0.75}}
\put(45,15){\circle*{0.75}}
\put(50,15){\circle*{0.75}}
\put(55,15){\circle*{0.75}}

\put(40,15){\line(0,-1){9.625}}
\put(40,15){\line(1,-1){9.7}}
\put(45,15){\line(0,-1){9.625}}
\put(45,15){\line(1,-2){4.8}}
\put(45,15){\line(1,-1){9.7}}
\put(50,15){\line(0,-1){9.625}}
\put(50,15){\line(1,-2){4.8}}
\put(55,15){\line(0,-1){9.625}}
\put(55,15){\line(-1,-1){9.7}}
\put(55,15){\line(-3,-2){14.625}}

{\footnotesize
\put(11,19.5){2}
\put(11,9.5){$c$}
\put(19.5,16){$d$}
\put(19.5,3){$a$}
\put(14.5,16){3}
\put(14.5,3){1}
\put(24,9.5){4}
\put(24,19.5){$b$}

\put(49.5,16){3}
\put(54.5,16){4}
\put(39.5,16){1}
\put(44.5,16){2}
\put(39.5,3){$a$}
\put(44.5,3){$b$}
\put(49.5,3){$c$}
\put(54.5,3){$d$}
}

\end{picture}
\]
For $G$ a free parameter set of maximal cardinality is $\{1,2\}$ with partner set $\{a,b\}$. For $G'$ we have $\{1,2,3\}$ with partner set $\{a,b,c\}$. In general these sets are not unique. For instance, another free parameter set of maximal size for $G$ is $\{2,3\}$ with partner set $\{b,d\}$.
\end{example}

\begin{remark}\label{bounds}
We recall that for a graph $G$ we denote the paired domination number by $\gamma_{~\textup{P}}(G)$ and the matching number by $\nu(G)$. The graphical dimension is bounded by these two numbers in the following way:
\[ \frac{\gamma_{~\textup{P}}(G)}{2}+1 \leq \gdim(G) \leq \nu(G)+1. \]
The second inequality is straightforward from the definition. The first is easy too. To see it, suppose that $A=\{a_1,\ldots ,a_r\}$ is a free parameter set with partner set $B=\{b_1,\ldots ,b_r\}$. If $\gamma_{~\textup{P}}(G)>2r$, then there is a vertex $v$ 
in $V \setminus (A \cup B)$ adjacent to none of the vertices of $A\cup B$. Choose a vertex $w$ adjacent to $v$, and set $a_{r+1}=v$, $b_{r+1}=w$. It turns out that $\{a_1,\ldots ,a_r,a_{r+1}\}$ is a free parameter set with partner set $\{b_1,\ldots ,b_r,b_{r+1}\}$.
\end{remark}

\begin{example}
In this example we will see that the graphical dimension may reach both the upper and lower bound given in the previous remark. The thick lines in the pictures on the left represent the edges of a minimal  paired dominating set.
\[
{\setlength{\unitlength}{1.15mm}
\begin{picture}(25,25)

\put(5,14){\circle*{1}}
\put(5,21){\circle*{1}}
\put(9,10){\circle*{1}}
\put(9,25){\circle*{1}}
\put(16,10){\circle*{1}}
\put(16,25){\circle*{1}}
\put(20,14){\circle*{1}}
\put(20,21){\circle*{1}}

\put(5,14){\line(0,1){7}}
\put(5,21){\line(1,1){4}}
{\linethickness{0.6mm}  \put(9,25){\line(1,0){7}}}
\put(16,25){\line(1,-1){4}}
\put(5,14){\line(1,-1){4}}
{\linethickness{0.6mm} \put(9,10){\line(1,0){7}}}
\put(16,10){\line(1,1){4}}

\put(-3,17){$G~:$}
{\small \put(32,22){$\nu(G)+1$}
\put(45,22){ = 5}
\put(32,17){$\gdim(G)$}
\put(45,17){ = 5}
\put(27,12){$\gamma_{~\textup{P}}(G)/2+1$}
\put(45,12){ = 3}}
\end{picture}}
\qquad\qquad\qquad
{\setlength{\unitlength}{2mm}
\begin{picture}(25,15)
\put(5,5){\circle{0.75}}
\put(10,5){\circle{0.75}}
\put(15,5){\circle{0.75}}
\put(20,5){\circle{0.75}}
\put(5,15){\circle*{0.75}}
\put(10,15){\circle*{0.75}}
\put(15,15){\circle*{0.75}}
\put(20,15){\circle*{0.75}}

\put(5,15){\line(0,-1){9.625}}
\put(5,15){\line(1,-2){4.8}}
\put(10,15){\line(0,-1){9.625}}
\put(10,15){\line(1,-2){4.8}}
\put(15,15){\line(0,-1){9.625}}
\put(15,15){\line(1,-2){4.8}}
\put(20,15){\line(0,-1){9.625}}

\end{picture}}
\]
\[
{\setlength{\unitlength}{1.15mm}
\begin{picture}(25,25)

\put(5,14){\circle*{1}}
\put(5,21){\circle*{1}}
\put(9,10){\circle*{1}}
\put(9,25){\circle*{1}}
\put(16,10){\circle*{1}}
\put(16,25){\circle*{1}}
\put(20,14){\circle*{1}}
\put(20,21){\circle*{1}}

\put(5,14){\line(0,1){7}}
\put(5,21){\line(1,1){4}}
{\linethickness{0.6mm} \put(9,25){\line(1,0){7}}}
\put(16,25){\line(1,-1){4}}
\put(5,14){\line(1,-1){4}}
{\linethickness{0.6mm} \put(9,10){\line(1,0){7}}}
\put(16,10){\line(1,1){4}}
\put(20,14){\line(0,1){7}}

{\small \put(32,22){$\nu(G)+1$}
\put(45,22){ = 5}
\put(32,17){$\gdim(G)$}
\put(45,17){ = 4}
\put(27,12){$\gamma_{~\textup{P}}(G)/2+1$}
\put(45,12){ = 3}}
\end{picture}}
\qquad\qquad\qquad
{\setlength{\unitlength}{2mm}
\begin{picture}(25,15)
\put(5,5){\circle{0.75}}
\put(10,5){\circle{0.75}}
\put(15,5){\circle{0.75}}
\put(20,5){\circle{0.75}}
\put(5,15){\circle*{0.75}}
\put(10,15){\circle*{0.75}}
\put(15,15){\circle*{0.75}}
\put(20,15){\circle*{0.75}}

\put(5,15){\line(0,-1){9.625}}
\put(5,15){\line(1,-2){4.8}}
\put(10,15){\line(0,-1){9.625}}
\put(10,15){\line(1,-2){4.8}}
\put(15,15){\line(0,-1){9.625}}
\put(15,15){\line(1,-2){4.8}}
\put(20,15){\line(0,-1){9.625}}
\put(20,15){\line(-3,-2){14.625}}

\end{picture}}
\]
\[
{\setlength{\unitlength}{1.15mm}
\begin{picture}(25,25)

\put(5,14){\circle*{1}}
\put(5,21){\circle*{1}}
\put(9,10){\circle*{1}}
\put(9,25){\circle*{1}}
\put(16,10){\circle*{1}}
\put(16,25){\circle*{1}}
\put(20,14){\circle*{1}}
\put(20,21){\circle*{1}}

\put(5,14){\line(0,1){7}}
\put(5,21){\line(1,1){4}}
\put(5,21){\line(1,0){15}}
{\linethickness{0.6mm} \put(9,25){\line(1,0){7}}}
\put(9,25){\line(0,-1){15}}
\put(16,25){\line(1,-1){4}}
\put(16,25){\line(0,-1){15}}
\put(5,14){\line(1,-1){4}}
\put(5,14){\line(1,0){15}}
{\linethickness{0.6mm} \put(9,10){\line(1,0){7}}}
\put(16,10){\line(1,1){4}}
\put(20,14){\line(0,1){7}}

{\small \put(32,22){$\nu(G)+1$}
\put(45,22){ = 5}
\put(32,17){$\gdim(G)$}
\put(45,17){ = 3}
\put(27,12){$\gamma_{~\textup{P}}(G)/2+1$}
\put(45,12){ = 3}}
\end{picture}}
\qquad\qquad\qquad
{\setlength{\unitlength}{2mm}
\begin{picture}(25,15)
\put(5,5){\circle{0.75}}
\put(10,5){\circle{0.75}}
\put(15,5){\circle{0.75}}
\put(20,5){\circle{0.75}}
\put(5,15){\circle*{0.75}}
\put(10,15){\circle*{0.75}}
\put(15,15){\circle*{0.75}}
\put(20,15){\circle*{0.75}}

\put(5,15){\line(0,-1){9.625}}
\put(5,15){\line(1,-2){4.8}}
\put(5,15){\line(1,-1){9.7}}
\put(10,15){\line(0,-1){9.625}}
\put(10,15){\line(1,-2){4.8}}
\put(10,15){\line(1,-1){9.7}}
\put(15,15){\line(0,-1){9.625}}
\put(15,15){\line(1,-2){4.8}}
\put(15,15){\line(-1,-1){9.7}}
\put(20,15){\line(0,-1){9.625}}
\put(20,15){\line(-3,-2){14.625}}
\put(20,15){\line(-1,-1){9.7}}
\end{picture}}
\]
\end{example}
In spite of the examples above, the graphical dimension is easy to compute at least for trees. In this case $\gdim(G)=\nu(G)+1$ (Proposition \ref{tree}), and there are many algorithms that compute the matching number of a bipartite graph. 

To prove the main result of this section, we need the following lemma:
\begin{lemma}\label{pointcover}
Let $G$ be a graph, $k >0$ a natural number and $\aa$ a basic $k$-cover of $G$. Denote by 
$A_{k/2}:= \{ v\in V~:~ \aa(v) \le k/2\}$. The set 
$A_{k/2}$ is a point cover of $G$ and  $\aa$ is uniquely determined by the values it takes on the vertices in 
$A_{k/2}$.
\end{lemma}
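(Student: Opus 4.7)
The plan is to exploit the basicness of $\alpha$ at each vertex $w \notin A_{k/2}$, which forces the existence of a neighbor $v$ with $\alpha(v)+\alpha(w)=k$; this neighbor will have to lie in $A_{k/2}$, giving both the point cover property and the reconstruction formula for $\alpha$ on $V \setminus A_{k/2}$.

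\textbf{Step 1: $A_{k/2}$ is a point cover.} Pick any $w \in V \setminus A_{k/2}$, so $\alpha(w) > k/2 \ge 0$, hence $\alpha(w) \ge 1$. Consider the function $\beta$ defined by $\beta(w) = \alpha(w) - 1$ and $\beta(v) = \alpha(v)$ for $v \ne w$; then $\beta < \alpha$, and by basicness $\beta$ fails to be a $k$-cover. The only edges on which the $k$-cover condition can fail are those incident to $w$, so there exists $v \in V$ with $\{v,w\} \in E$ and $\alpha(v) + (\alpha(w)-1) < k$. Combined with $\alpha(v)+\alpha(w) \ge k$, this gives $\alpha(v)+\alpha(w) = k$, hence $\alpha(v) = k - \alpha(w) < k/2$, so $v \in A_{k/2}$. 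This simultaneously shows $w$ has a neighbor in $A_{k/2}$.

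\textbf{Step 2: Reconstruction of $\alpha$.} Suppose $\alpha|_{A_{k/2}}$ is known; I will recover $\alpha(w)$ for every $w \notin A_{k/2}$. The $k$-cover inequality gives $\alpha(w) \ge k - \alpha(v)$ for every neighbor $v$ of $w$, so
\[
\alpha(w) \ge \max\{k - \alpha(v) : v \sim w\}.
\]
From Step 1 (applied to this very $w$) equality is attained for some neighbor $v \in A_{k/2}$. On the other hand, any neighbor $v \notin A_{k/2}$ satisfies $\alpha(v) > k/2$, so $k - \alpha(v) < k/2 < \alpha(w)$ and cannot realize the maximum. Therefore
\[
\alpha(w) = \max\{k - \alpha(v) : v \sim w,\ v \in A_{k/2}\},
\]
an expression depending only on $\alpha|_{A_{k/2}}$. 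This proves uniqueness.

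I do not expect any serious obstacle: the only subtle point is checking that the relevant maximum in Step 2 is really achieved by a vertex of $A_{k/2}$ and not by some vertex outside, but the strict inequality $k - \alpha(v) < k/2 < \alpha(w)$ settles this cleanly. One should just be careful to note that, since $k \ge 1$ and $\alpha(w) > k/2$, we have $\alpha(w) \ge 1$, which legitimates the ``decrease by one'' argument used to invoke basicness.
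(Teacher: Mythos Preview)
Your proof is correct and follows essentially the same approach as the paper's: both use basicness at a vertex $w\notin A_{k/2}$ to produce a neighbor $v$ with $\alpha(v)+\alpha(w)=k$, deduce $v\in A_{k/2}$, and then recover $\alpha(w)$ via the formula $\alpha(w)=\max\{k-\alpha(v):v\sim w,\ v\in A_{k/2}\}$. You simply make explicit the ``decrease by one'' step and the verification that the maximum is attained inside $A_{k/2}$, which the paper leaves implicit.
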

\begin{proof}
Denote by $W :=  V \setminus A_{k/2} = \{ w \in V ~:~ \aa(w) >k/2\}$. As $\aa$ is basic, for each vertex $w \in W$ there exists a vertex $v$ such that $\{w,v\} \in E$ and $\aa(w) + \aa(v) = k$. As $\aa(w) > k/2$ we must have that $\aa(v) < k/2$. So    the set  $A_{k/2}$ is a point cover of $G$.

It is easy to see that the only possible choice to extend $\aa$ on the set $W$ is:
\[ \aa(w) = \max\{ k- \aa(v) ~:~ \{v,w\} \in E, \textup{~and~} v \in A_{k/2} \}.\]
As  $A_{k/2}$ is a point cover, the set we are considering is not empty for any $w \in W$. In order to obtain a $k$-cover, we need to assign to $\aa(w)$ at least the maximum considered above. But in order to obtain a basic $k$-cover we need to assign  exactly this value.
\end{proof}

Before stating the main theorem of this section we need the following crucial remark.

\begin{remark}\label{polynomial}
There exists a polynomial $P\in \mathbb{Q}[t]$ of degree $\dim (\AG)-1$ such that, for $h\gg 0$, 
\[P(h)=|\{\mbox{basic $2h$-covers of $G$}\}|.\] 
To see this, let us consider the second Veronese of $\AG$, namely $\AG^{(2)}=\oplus_{h\geq 0}\AG_{2h}$. By \cite[Theorem 5.1.a]{HHT} we have that $\AG^{(2)}$ is a standard graded $K$-algebra. So it has a Hilbert polynomial, denoted by $\HP$, such that $\HP(h)=\dim K(\AG_{2h})$ for $h\gg 0$. Notice that $\AG$ is a finite $\AG^{(2)}$-module, so $\dim(\AG)=\dim(\AG^{(2)})$, which is the degree the degree of $\HP$ minus $1$. So it is enough to take $P=\HP$.
\end{remark}

Now we can prove the main result of this section.
\begin{thm}\label{dim}
Let $\AG$ be the symbolic fiber cone of the cover  ideal of a graph $G$. Then
\[ \dim(\AG) = \gdim(G). \]
\end{thm}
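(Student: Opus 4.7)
By Remark \ref{polynomial}, $\dim(\AG)-1$ equals the degree of the Hilbert polynomial $\HP$ of $\AG^{(2)}$, which for $h\gg 0$ evaluates to the number of basic $2h$-covers of $G$. Setting $r=\gdim(G)-1$, I reduce to showing that this count grows as a polynomial of degree exactly $r$ in $h$, and prove the two inequalities separately.

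\emph{Lower bound.} Fix a free parameter set $A=\{a_1,\ldots,a_r\}$ of maximum size with partner set $B=\{b_1,\ldots,b_r\}$. For each weakly decreasing tuple $h\ge t_1\ge\cdots\ge t_r\ge 0$, define $\aa_t$ on $V$ by $\aa_t(a_i)=t_i$, $\aa_t(b_i)=2h-t_i$, and $\aa_t(v)=2h$ for $v\in V\setminus(A\cup B)$. The only cover inequalities that are not automatic are those on edges $\{a_i,b_j\}$, which by the partner condition require $i\le j$; on such edges the sum $t_i+2h-t_j\ge 2h$ holds by monotonicity of $t$. Each edge $\{a_i,b_i\}$ is tight, so the iterative reduction to a basic cover performed only on $V\setminus(A\cup B)$ preserves the values on $A\cup B$. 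Distinct tuples therefore produce distinct basic $2h$-covers, giving $\binom{h+r}{r}$ of them, a polynomial of degree $r$.

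\emph{Upper bound.} Given a basic $2h$-cover $\aa$, set $C(\aa)=\{v:\aa(v)\le h\}$ and $V_1=\{v\in V:0<\aa(v)<h\}$. The set $V_1$ is independent because two adjacent vertices in $V_1$ would violate the cover inequality. For $v\in V_1$, basicness furnishes a tight neighbor $w$ with $\aa(w)>h$, so $w\notin C(\aa)$; combining $\aa(v)+\aa(w)=2h$ with the formula $\aa(w)=2h-\min\{\aa(u):u\in C(\aa),\, u\sim w\}$ from Lemma \ref{pointcover} yields $\aa(v)=\min\{\aa(u):u\in C(\aa),\, u\sim w\}$. Now pick one representative $v_i$ from each $\aa$-level of $V_1$, ordered so that $\aa(v_1)>\cdots>\aa(v_{s'})$, with corresponding tight neighbors $w_i$. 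The $w_i$ are pairwise distinct because their $\aa$-values $2h-\aa(v_i)$ are, and the cover inequality $\aa(v_i)+\aa(w_j)\ge 2h$ together with $\aa(w_j)=2h-\aa(v_j)$ gives $\aa(v_i)\ge\aa(v_j)$, i.e.\ $i\le j$. Hence $\{v_1,\ldots,v_{s'}\}$ is a free parameter set of $G$, forcing $s'\le r$.

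To conclude, I stratify basic $2h$-covers by their combinatorial type, which records the point cover $C$, the partition of $C$ into vertices with $\aa$-value $0$, values in $(0,h)$, and value $h$, the partition of $V_1$ into ordered $\aa$-levels, and a choice of tight partner for each level. Only finitely many types occur (independently of $h$), and within a type the cover is determined by a strictly decreasing choice of $s'$ values in $\{1,\ldots,h-1\}$, giving at most $\binom{h-1}{s'}=O(h^r)$ covers. Summing over types yields $O(h^r)$, whence $\deg\HP\le r$ and $\dim\AG\le r+1=\gdim(G)$. The main obstacle is the upper bound: one must extract a genuine free parameter set (not just an independent subset but also matching partners satisfying the triangular adjacency condition) from the level structure of an arbitrary basic cover; the triangular condition is precisely what the cover inequality applied to distinct levels produces.
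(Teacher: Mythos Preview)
Your proof is correct and follows essentially the same route as the paper's. In both arguments the lower bound comes from manufacturing many basic $2h$-covers out of a maximal free parameter set $A$ with partner set $B$ (the paper extends greedily from $A\cup B$ using strictly decreasing tuples, you assign $2h$ outside and reduce using weakly decreasing tuples; either way one gets a degree-$r$ count), and the upper bound hinges on the same key construction: from a basic cover, the vertices carrying the distinct low values together with their tight neighbours form a free parameter set, so the number of such values is at most $r$. The only real difference is organisational: the paper runs the upper bound as a contradiction (if no cover had $s$ distinct low values then the count would be $O(k^{s-1})$), whereas you argue directly by stratifying covers according to their level structure on $C(\aa)$ and invoking Lemma~\ref{pointcover} to see that a type plus $s'\le r$ level values determines the cover. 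Both buy the same thing; your stratification is a bit more explicit about why the count is $O(h^r)$.
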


\begin{proof} We will first prove that $\dim(\AG) \ge \gdim(G)$.
By Remark \ref{polynomial} we have to show that $|\{\mbox{basic $2h$-covers of $G$}\}|$ grows as a polynomial in $h$ of degree at  least $\gdim(G)-1$.

Let $A = \{a_1, \ldots, a_r\} \subseteq V$ be a free parameter set of maximal cardinality, $B = \{b_1, \ldots, b_r\}$ a partner set of $A$ and denote by $X = A \cup B$. So $\gdim(G) =r+1$.
Let $k > 2r$ be an even natural number. 
 We will construct a basic $k$-cover of $G$ for every decreasing sequence of numbers:
 \[ \frac{k}{2}\ge i_1 > i_2 > \ldots > i_r \ge 0.\] 
As the number of decreasing sequences as above is $\binom{k/2+1}{r}$, this will imply that the degree of $\HP$ is at least $r$, so also that $\dim(\AG) \ge \gdim(G)$. 
 For a decreasing sequence as above and  for all $j \in [r]$ we define:
\begin{eqnarray*}
 \aa(a_j)& := & i_j,\\
 \aa(b_j)&:=& k-i_j.
 \end{eqnarray*} 
As $G$ is connected, if $V \setminus X \neq \emptyset$, there exists a vertex $v \in  V \setminus X$ such that there exists at least one edge between $v$ and $X$. We define:
\[ \aa(v): = \max\{ k - \aa(w) ~:~ w \in X \textup{~and~} \{v,w\} \in E\},\]
append $v$ to $X$ and continue in the same way until $\aa$ is defined for all vertices of $G$.
It is easy to see that by construction, for each edge $\{v,w\}$ with $v \notin X$ or $w \notin X$ (or both), we have $\aa(v) + \aa(w) \ge k$ and that for each vertex $v \notin X$ there exists another vertex $v'$ such that $\aa(v) + \aa(v') = k$. So to check that we defined a basic $k$-cover we need to focus on the  vertices in $X$.

Let $\{v,w\}$ be an edge with $v,w \in X$. As $A$ is a set of independent vertices, we can assume that $w = b_j \in B$ and check the following two cases:\\
If $v = a_h \in A$ then by definition $h \le j$, and by construction:
\[ \aa(a_h) + \aa(b_j) = i_h + k - i_j \ge k.\]
If $v = b_l \in B$ then:
\[ \aa(b_l) + \aa(b_j) = k - i_l + k - i_j \ge k.\]
So $\aa$ is a $k$-cover. The fact that $\{a_j,b_j\} \in E$ for each $1\le j \le r$  guarantees that $\aa$ is a basic $k$-cover.

Assume now that $\dim(\AG) = s+1$. To prove that $\dim(\AG) \le \gdim(G)$ we will use the following:

\emph{Claim:} For some  natural number $k \gg0$ there exists a basic $k$-cover $\aa$ such that there are at least $s$ different values of $\aa$ which are smaller than $k/2$, namely:
\[ |\{ \aa(v) ~:~ v \in V \textup{~and~} \aa(v)  \le k/2\}| \ge s.\]
Suppose the claim is true and let $\aa$ be a basic $k$-cover as above. Denote by 
\[ \{i_1, \ldots, i_r\} := \{ \aa(v) ~:~ v \in V \textup{~and~} \aa(v)  \le k/2\}.\]
By the claim $r\ge s$. We can also assume that $i_1 > i_2 > \ldots > i_r$. For each $1\le j \le r$  choose a vertex $a_j \in V$ such that $\aa(a_j)= i_j$
and denote by
 \[A := \{a_1, \ldots, a_r\}.\] 
 As $\aa$ is a basic $k$-cover, for each $1 \le j \le r$ there exists a vertex $b_j \in V$ such that $\aa(a_j) + \aa(b_j) = k$. Choose one such $b_j$ for each $j$ and denote by 
\[ B := \{b_1, \ldots, b_r\}.\]  
 It is not difficult to see that $A$ is a free parameter set with the partner set  $B$, so 
 \[ \gdim(G) \ge r+1 \ge s+1 = \dim(\AG).\]
So we only need to prove the claim.

Suppose there is no  $\aa$ as we claim. Then, for every $k\geq 0$, there is an injection 
\[ \{\mbox{basic $k$-covers of $G$}\} \hookrightarrow \{(a_1, \ldots , a_n): \ 0 \leq a_i \leq k/2 \mbox{ and }|\{a_1, \ldots , a_n\}|<s\},\]
Using the notation of Lemma \ref{pointcover}, the application above is given by associating to each basic $k$-cover $\aa$, a vector which has the same values as $\aa$ on   $A_{k/2}$ and is 0 in all the other positions. Lemma \ref{pointcover} guarantees that this is an injection.

It is not difficult to see that the cardinality of the set on the right-hand side is equal to $C\cdot(k+1)^{s-1}$, where $C$ is a constant depending on $n$ and $s$. Therefore Remark \ref{polynomial} implies $\dim \AG \leq s$, a contradiction.

\end{proof}

We recall that the analytic spread of a homogeneous ideal $I\subseteq S$, denoted by $\ell(I)$, is the dimension of its ordinary fiber cone. When $K$ is an infinite field,  Northcott and Rees proved in \cite{NR} that  $\ell(I)$ is the cardinality of a set of minimal generators of a minimal reduction of $IS_{\mm}$, i.e. an ideal $\mathfrak{a}\subseteq S_{\mm}$ minimal by inclusion and such that there exists $k$ for which $\mathfrak{a}(IS_{\mm})^k=(IS_{\mm})^{k+1}$.

\begin{corollary}
Let $G$ be a bipartite graph. Then
\[\ell(J(G))=\gdim(G).\]
\end{corollary}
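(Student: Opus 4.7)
My plan is to derive this corollary as an essentially immediate consequence of Theorem \ref{dim}, using the fact (already recorded in the excerpt) that for a bipartite graph, the symbolic and ordinary Rees algebras of $J(G)$ coincide.

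More precisely, I would argue as follows. The analytic spread $\ell(J(G))$ is by definition the Krull dimension of the ordinary fiber cone $R(J(G))/\mm R(J(G))$, where $R(J(G))=\bigoplus_{k\geq 0}J(G)^k t^k$. On the other hand $\bar A(G)$ is the symbolic fiber cone $R(J(G))_s/\mm R(J(G))_s$. These two algebras agree exactly when $J(G)^{(k)}=J(G)^k$ for every $k\geq 0$, i.e.\ when $R(J(G))=R(J(G))_s$, which in turn is equivalent to $A(G)$ being standard graded as an $S$-algebra. The excerpt already records the theorem of Herzog--Hibi--Trung that $A(G)$ is standard graded if and only if $G$ is bipartite. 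So for bipartite $G$ we have a canonical isomorphism of graded $K$-algebras
\[
R(J(G))/\mm R(J(G))\ \cong\ \bar A(G),
\]
and consequently $\ell(J(G))=\dim \bar A(G)$.

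Now I would simply invoke Theorem \ref{dim}, which gives $\dim \bar A(G)=\gdim(G)$ for an arbitrary graph $G$, and hence in particular for a bipartite one. Combining the two equalities yields $\ell(J(G))=\gdim(G)$.

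There is essentially no obstacle: the corollary is a bookkeeping statement that records what Theorem \ref{dim} says in the bipartite case, translated from the (a priori more general) symbolic fiber cone into the classical analytic spread. The only subtlety worth stressing in the write-up is the equivalence between $A(G)$ being standard graded and the equality of symbolic and ordinary powers of $J(G)$, which ensures that the two fiber cones coincide for bipartite graphs and thus that $\ell(J(G))$ is the quantity computed by Theorem \ref{dim}.
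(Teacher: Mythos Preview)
Your proposal is correct and follows essentially the same route as the paper: both use the Herzog--Hibi--Trung result that $A(G)$ is a standard graded $S$-algebra if and only if $G$ is bipartite, deduce that $\AG$ coincides with the ordinary fiber cone of $J(G)$ in this case, and then invoke Theorem~\ref{dim}. Your write-up is slightly more explicit about the equivalence between standard gradedness and the equality $J(G)^{(k)}=J(G)^k$, but the argument is the same.
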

\begin{proof}
As said in the preliminaries, in \cite[Theorem 5.1.b]{HHT} the authors showed that $G$ is bipartite if and only if $A(G)$ is a standard graded $S$-algebra. This is equivalent to $A(G)$ being the ordinary Rees algebra of $J(G)$. Therefore, when $G$ is bipartite, $\AG$ is the ordinary fiber cone of $J(G)$, so the corollary follows by Theorem \ref{dim}.
\end{proof}

Before we state the next proposition, let us establish some notation that we will use in its proof. Let $G$ be a bipartite graph  with bipartition of the vertex set $V_1 \cup V_2$. In order to compute the graphical dimension we only need to look at free parameter sets $A_0 \subseteq V_1$ with partner sets $B_0\subseteq V_2$.
Notice that the graph induced by the  set of vertices  $A_0 \cup B_0$ may not be connected. Denote this graph by $G_0$ and  denote its connected components  by $C_1, C_2,$ and so on. Notice that if $G$ is a tree, then for any vertex $v \notin A_0 \cup B_0$, if there exists  an edge $\{v,w_0\}$, with $w_0 $ in some $ C_i$, then $\{v,w\}$ is not an edge for any $w\in C_i$, $w\neq w_0$. In other words, a vertex outside $G_0$ is ``tied" to a connected component of $G_0$ by at most one edge.  
\begin{prop}\label{tree}
If $ G$ is  a tree, then
$ \dim \AG =\nu(G)+1$, where $\nu(G)$ is the matching number of $G$.
\end{prop}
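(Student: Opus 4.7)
By Theorem~\ref{dim} the statement is equivalent to $\gdim(G)=\nu(G)+1$, and Remark~\ref{bounds} already supplies the upper bound $\gdim(G)\leq\nu(G)+1$, so the task reduces to exhibiting an explicit free parameter set in $G$ of cardinality $\nu(G)$. The plan is to induct on the matching number; since deleting two adjacent vertices from a tree typically breaks it into several components, I would formulate the inductive claim in the slightly enlarged class of forests so that the induction stays within the class.

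The statement to prove inductively is: every forest $F$ with $\nu(F)\geq 1$ contains a free parameter set of size $\nu(F)$. The base case $\nu(F)=1$ is trivial: any single edge $\{v,w\}$ of $F$ yields the free parameter set $A=\{v\}$ with partner $\{w\}$. For the inductive step, I would pick a component $C$ of $F$ with $\nu(C)\geq 1$, take a leaf $v$ of $C$, and let $w$ be its unique neighbor. A short exchange argument shows that some maximum matching of $F$ contains the edge $\{v,w\}$: if a maximum matching $M$ does not use $v$, then the maximality of $M$ forces $w$ to be matched to some vertex $w'$, and $(M\setminus\{w,w'\})\cup\{v,w\}$ is again maximum. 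Consequently the forest $F':=F\setminus\{v,w\}$ satisfies $\nu(F')=\nu(F)-1$, and the inductive hypothesis produces a free parameter set $\{a_1,\ldots,a_{\nu-1}\}$ in $F'$ with partner $\{b_1,\ldots,b_{\nu-1}\}$. I would then append $a_\nu:=v$ and $b_\nu:=w$ at the largest index.

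The verification of the defining properties of a free parameter set is where the leaf assumption is essential. Independence of $A=\{a_1,\ldots,a_\nu\}$ and disjointness $A\cap B=\emptyset$ are immediate, since the only neighbor of $v$ is $w\notin A$ and both $v,w$ lie outside $V(F')$. The only genuinely new order condition is $\{a_i,b_j\}\in E(F)\Rightarrow i\leq j$ in the case $i=\nu$, $j<\nu$, which holds vacuously: an edge $\{v,b_j\}$ in $F$ would force $b_j=w$, contradicting $b_j\in V(F')$. All remaining cases are inherited directly from the inductive hypothesis. I expect no real technical obstacle; the only nonroutine input is the standard exchange argument guaranteeing that a leaf's incident edge lies in some maximum matching, after which everything is bookkeeping.
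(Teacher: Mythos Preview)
Your proof is correct and takes a genuinely different route from the paper's. Both arguments reduce, via Theorem~\ref{dim} and Remark~\ref{bounds}, to producing a free parameter set of cardinality $\nu(G)$; after that they diverge. The paper argues by contradiction: it fixes a free parameter set $A_0=\{a_1,\ldots,a_r\}$ of maximal size with partner set $B_0$, assumes the matching $\{\{a_i,b_i\}\}$ is not maximum, invokes Berge's theorem to obtain an $M$-augmenting path, analyzes its shape using bipartiteness and the order condition, and then relabels along the path (and reorders the connected components of the subgraph induced on $A_0\cup B_0$) to manufacture a free parameter set of size $r+1$. Your argument instead builds the free parameter set directly by induction on $\nu$ over forests: peel off a leaf $v$ with neighbor $w$, use the standard exchange to ensure $\{v,w\}$ lies in some maximum matching, delete $\{v,w\}$, and append $(v,w)$ at the top index of the inductively obtained pair $(A,B)$.

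Your approach is more elementary and self-contained: it avoids the citation of Berge's theorem and the somewhat delicate relabeling step in the paper, replacing them with a one-line exchange argument and trivial bookkeeping (the key order condition $\{a_\nu,b_j\}\notin E$ for $j<\nu$ is forced by $v$ being a leaf). The price is the mild enlargement of the inductive class to forests, which is harmless since the definition of a free parameter set does not require connectedness. The paper's approach, on the other hand, stays inside the given tree and highlights the link with augmenting paths, which is perhaps more suggestive if one wanted to push the argument beyond trees.
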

\begin{proof}
By Remark \ref{bounds} and Theorem \ref{dim} we only have to prove that $\gdim(G) \geq \nu(G)+1$ whenever $G$ is a tree. 
Choose $A_0=\{a_1,\ldots, a_r\}$ a maximal free parameter set with partner set $B_0=\{b_1,\ldots,b_r\}$
 and suppose that the matching $M = \{ \{a_ib_i\} \}_{i=1,\ldots,r}$ is not maximal. \\
 By a classical result of Berge (for instance see the book of Lov\'asz and Plummer \cite[Theorem 1.2.1]{LP})  we get that there must exist an augmenting path in $G$ relative to $M$. As $G$ is bipartite  it is easy to see that this path must be of the form $P = a', b_{i_1}, a_{i_1},\ldots, b_{i_k},a_{i_k},b',$ and  as $A_0$ is a free parameter set the indices must be ordered in the following way $1\le i_1<\ldots< i_k \le r.$  We will construct a new free parameter set with $r+1$ elements. Notice that $a'$ and $b'$ are not vertices of $G_0$.\\
  Denote by $C$ the connected component of $G_0$  to which the vertices in $P\cap(A_0\cup B_0) $ belong.
We reorder the connected components such that the $C_i$'s to which $b'$ is connected come first, $C$ comes next and the connected components to which $a'$ is connected come last. Inside $C$ we relabel the vertices such that $a_{i_k}, a_{i_{k-1}}, \ldots, a_{i_1}, a'$ are the first $k+1$ with partners $b', b_{i_k}, \ldots, b_{i_2}, b_{i_1}$. It is easy to see now that, as there are no cycles in $G$,  we obtain a new free parameter set of cardinality $r+1$, a contradiction. 
\end{proof}

Given an ideal $I$ of some ring $R$ we recall that the arithmetical rank of $I$ is the integer
\[\ara(I)=\min \{r: \ \exists \ f_1, \ldots , f_r \in R \mbox{ for which }\sqrt{I}= (f_1, \ldots , f_r)\}.\]
If $R$ is a factorial domain, geometrically $\ara(I)$ is the minimal number of hypersurfaces that define set-theoretically the scheme $\mathcal{V}(I)$ in $\operatorname{Spec}(R)$. As we said in the beginning of this section we can obtain interesting upper bounds for this number in the case of monomial ideals of pure codimension $2$ in $S_{\mm}$.

\begin{corollary}\label{ara}
Let $K$ be an infinite field, and $G$ a graph. Then
\[\ara(J(G)S_{\mm})\leq \gdim(G).\]
In particular, $ \ara(J(G)S_{\mm})\leq \nu(G)+1$. 
\end{corollary}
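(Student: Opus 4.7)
The plan is to reduce the general case to an ideal whose fiber cone is exactly $\AG^{(2)}$, then combine Theorem \ref{dim} with the Northcott--Rees theorem on minimal reductions.

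First I would observe that since $J(G)$ is a radical monomial ideal, all of its symbolic powers have radical $J(G)$; in particular $\sqrt{J(G)^{(2)}}=J(G)$, so it suffices to bound $\ara\bigl(J(G)^{(2)}S_{\mm}\bigr)$. The point of passing to $J(G)^{(2)}$ is that by \cite[Theorem 5.1.a]{HHT} the algebra $A(G)^{(2)}=\bigoplus_{k\ge 0}J(G)^{(2k)}t^{2k}$ is standard graded, or equivalently $J(G)^{(2k)}=\bigl(J(G)^{(2)}\bigr)^k$ for every $k\ge 1$. Thus $A(G)^{(2)}$ coincides with the ordinary Rees algebra of $J(G)^{(2)}$, and its ordinary fiber cone is $\AG^{(2)}$. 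Consequently the analytic spread of $J(G)^{(2)}$ equals $\dim \AG^{(2)}$, which in turn equals $\dim \AG=\gdim(G)$: the first equality holds because $\AG$ is a finite $\AG^{(2)}$-module (Remark \ref{polynomial}), and the second is exactly Theorem \ref{dim}.

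Next I would invoke Northcott--Rees \cite{NR}: since $K$ is infinite, the ideal $J(G)^{(2)}S_{\mm}$ admits a minimal reduction $\mathfrak{a}\subseteq S_{\mm}$ generated by $\ell\bigl(J(G)^{(2)}\bigr)=\gdim(G)$ elements, satisfying $\mathfrak{a}\bigl(J(G)^{(2)}S_{\mm}\bigr)^{k}=\bigl(J(G)^{(2)}S_{\mm}\bigr)^{k+1}$ for some $k$. Taking radicals yields $\sqrt{\mathfrak{a}}=\sqrt{J(G)^{(2)}S_{\mm}}=J(G)S_{\mm}$, so $\ara\bigl(J(G)S_{\mm}\bigr)\le \gdim(G)$. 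The ``in particular'' part is then immediate from the inequality $\gdim(G)\le \nu(G)+1$ established in Remark \ref{bounds}.

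No step really presents an obstacle here; the only conceptual subtlety is that for non-bipartite $G$ the algebra $\AG$ itself is not the ordinary fiber cone of any ideal, so one cannot invoke Northcott--Rees for $J(G)$ directly. The Veronese trick circumvents this: even when $A(G)$ fails to be standard graded, $A(G)^{(2)}$ is, and this is what lets the combinatorial dimension count of Theorem \ref{dim} translate into a bound on the arithmetical rank.
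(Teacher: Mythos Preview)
Your proof is correct and follows essentially the same route as the paper: pass to the second Veronese so that $\AG^{(2)}$ becomes the ordinary fiber cone of $J(G)^{(2)}$, identify its dimension with $\gdim(G)$ via Theorem \ref{dim} and Remark \ref{polynomial}, then apply Northcott--Rees to obtain a minimal reduction whose radical is $J(G)S_{\mm}$. The only cosmetic difference is that the paper phrases the analytic spread computation directly for $(J(G)S_{\mm})^{(2)}$, whereas you compute $\ell(J(G)^{(2)})$ in $S$ first and then localize; these are equivalent.
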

\begin{proof}
Let us consider the second Veronese of $\AG$, i.e. 
\[ \AG^{(2)}=\bigoplus_{i\geq 0}\AG_{2i}. \]
By \cite[Theorem 5.1.a]{HHT} we have $J(G)^{(2i)}=(J(G)^{(2)})^i$, so that $\AG^{(2)}$ is the ordinary fiber cone of $J(G)^{(2)}$. Since $\AG$ is finite as a $\AG^{(2)}$-module, the Krull dimensions of $\AG$ and the one of $\AG^{(2)}$ are the same. Therefore, using Theorem \ref{dim}, we get
\[ \gdim(G)=\dim \AG^{(2)}=\ell(J(G)^{(2)})=\ell((J(G)S_{\mm})^{(2)}). \]
By a result in \cite[p.151]{NR}, since $K$ is infinite, the analytic spread of $(J(G)S_{\mm})^{(2)}$ is the cardinality of a set of minimal generators of a minimal reduction of it. The radical of such a reduction is clearly the radical of $(J(G)S_{\mm})^{(2)}$, i.e. $J(G)S_{\mm}$. So we get the desired inequality.
\end{proof}

\begin{remark}
The author of \cite{Ly1} proved that the arithmetical rank of a monomial ideal of pure codimension $2$, once localized at $\mm$, is at most $\lfloor n/2 \rfloor + 1$, where $n$ is the numbers of variables. But every squarefree monomial ideal of codimension $2$ is obviously of the form $J(G)$ for some graph on $[n]$. So, since $\nu(G)$ is at most $\lfloor n/2 \rfloor$, Corollary \ref{ara} refines the result of Lyubeznik.
\end{remark}

\begin{corollary}
Let $G$ be a graph for which $\gdim(G)-1$ is equal to the maximum size of a set of pairwise disconnected edges, then
\[\ara(J(G)S_{\mm}) = \gdim(G).\]
\end{corollary}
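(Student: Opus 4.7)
The upper bound $\ara(J(G)S_\mm)\le \gdim(G)$ is Corollary \ref{ara}. Set $d=\gdim(G)-1$ and let $E'=\{e_1,\ldots,e_d\}$ with $e_i=\{a_i,b_i\}$ be a pairwise disconnected set of edges provided by the hypothesis. The matching lower bound will follow once I show $\operatorname{cd}(S,J(G))\ge d+1$: indeed, the local cohomology of a graded ideal yields graded modules, and a nonzero graded $S$-module stays nonzero after localising at $\mm$, so $\operatorname{cd}(S_\mm,J(G)S_\mm)=\operatorname{cd}(S,J(G))$, and one has $\ara(J(G)S_\mm)\ge\operatorname{cd}(S_\mm,J(G)S_\mm)$.

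Put $W=\{a_1,b_1,\ldots,a_d,b_d\}$ and $U=V\setminus W$, and consider the localisation $R=S[x_u^{-1}:u\in U]$, which is flat over $S$ and also over $T:=K[x_v:v\in W]$. The pairwise disconnected condition forces the only edges of $G$ with both endpoints in $W$ to be exactly $e_1,\ldots,e_d$. For every other edge $e\in E$ at least one of its defining variables becomes a unit in $R$, so the prime $P_e=(x_i,x_j)$ extends to the unit ideal of $R$. Consequently
\[
J(G)R\;=\;\bigcap_{i=1}^d (x_{a_i},x_{b_i})R\;=:\;I_d R .
\]
Flat base change gives $\operatorname{cd}(R,I_dR)=\operatorname{cd}(T,I_d)$, and flatness of $S\to R$ gives $\operatorname{cd}(S,J(G))\ge\operatorname{cd}(R,J(G)R)=\operatorname{cd}(T,I_d)$. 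The problem is thereby reduced to computing $\operatorname{cd}(T,I_d)$.

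I would establish $\operatorname{cd}(T,I_d)=d+1$ by induction on $d$, the case $d=1$ being trivial as $I_1=(x_{a_1},x_{b_1})$ is generated by two elements. For the inductive step, apply Mayer--Vietoris to $I_{d+1}=I_d\cap P_{d+1}$, with $P_{d+1}=(x_{a_{d+1}},x_{b_{d+1}})$. Since $I_d$ and $P_{d+1}$ are supported on disjoint sets of variables, the Künneth formula yields
\[
H^i_{I_d+P_{d+1}}(T_{d+1})\;\cong\;H^{\,i-2}_{I_d}(T_d)\otimes_K H^{2}_{P_{d+1}}\bigl(K[x_{a_{d+1}},x_{b_{d+1}}]\bigr),
\]
so by induction $\operatorname{cd}(T_{d+1},I_d+P_{d+1})=d+3$, with $H^{d+3}$ nonzero. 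Combined with the vanishing of $H^{d+2}_{I_d}$, $H^{d+2}_{P_{d+1}}$, $H^{d+3}_{I_d}$ and $H^{d+3}_{P_{d+1}}$, the Mayer--Vietoris long exact sequence pins down $H^{d+2}_{I_{d+1}}\cong H^{d+3}_{I_d+P_{d+1}}\ne 0$ and $H^i_{I_{d+1}}=0$ for $i>d+2$, closing the induction. The main technical hurdle is precisely this Mayer--Vietoris/Künneth bookkeeping; the passage from $S$ to $R$ and the reduction to $T$ are straightforward applications of flat base change.
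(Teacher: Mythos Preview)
Your argument is correct, but it takes a genuinely different route from the paper's proof. The paper obtains the lower bound by stringing together three cited results: Katzman's inequality gives $\reg(S/I(G))\ge d$ from the $d$ pairwise disconnected edges; Terai's Alexander duality theorem converts this into $\pd(S/J(G))\ge d+1$; and Lyubeznik's identity $\pd(S/J(G))=\operatorname{cd}(S,J(G))=\operatorname{cd}(S_\mm,J(G)S_\mm)$ for square-free monomial ideals then yields $\ara(J(G)S_\mm)\ge d+1$. Your approach, by contrast, bypasses regularity, Alexander duality, and the $\pd=\operatorname{cd}$ theorem entirely: you invert the variables outside the matching to reduce $J(G)$ to the cover ideal of $d$ disjoint edges, and then compute its cohomological dimension directly by an inductive Mayer--Vietoris/K\"unneth argument.

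Each approach has its merits. The paper's proof is a two-line application of existing machinery, but it imports nontrivial results from three separate sources. Your proof is more self-contained and makes the geometry transparent: the pairwise disconnected edges literally survive the localisation and force the local cohomology to be nonzero in degree $d+1$. The only places to tighten your write-up are cosmetic: you claim $\operatorname{cd}(S_\mm,J(G)S_\mm)=\operatorname{cd}(S,J(G))$ but only need (and only argue) the inequality $\ge$; and the K\"unneth isomorphism for local cohomology over a field, while standard, deserves a reference. Otherwise the bookkeeping is sound.
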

\begin{proof}
By a result of Katzman (\cite[Proposition 2.5]{Ka}) the maximum size of a set of pairwise disconnected edges of $G$ provides a lower bound for the Castelnuovo-Mumford regularity of $S/I(G)$. Therefore, $\reg(S/I(G))\geq \gdim(G)-1$. But $J(G)$ is the Alexander dual of $I(G)$, so a result of Terai (\cite{Te}) implies that $\pd(S/J(G))\geq \gdim(G)$. Now, Lyubeznik showed in \cite{Ly} that $\pd(S/I)=\operatorname{cd}(S,I)=\operatorname{cd}(S_{\mm},IS_{\mm})$ (cohomological dimension) for any square-free monomial ideal $I$. Since the cohomological dimension provides a lower bound for the arithmetical rank, we get $\ara(J(G)S_{\mm})\geq \gdim(G)$. Now we get the conclusion by Corollary \ref{ara}.
\end{proof}

\begin{corollary}
Let $I\subseteq S=K[x_1,\ldots ,x_n]$ be  a square-free monomial ideal of pure codimension $2$, and let $d$ be the minimum degree of a non zero monomial in $I$. Assume that the field $K$ is infinite. Then 
\[\ara(IS_{\mm})\leq \min \{d+1,n-d+1\}\]
\end{corollary}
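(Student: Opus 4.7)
The plan is to reduce the statement to a bound on $\gdim(G)$ and then produce two independent combinatorial bounds on $\gdim$.

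First I would observe that any square-free monomial ideal $I\subseteq S$ of pure codimension $2$ can be written as $I=J(G)$ for a unique graph $G$ on (a subset of) $[n]$: since the minimal primes of $I$ are exactly the monomial primes of height $2$, each has the form $(x_i,x_j)$, and one defines $E(G)=\{\{i,j\}:(x_i,x_j)\hbox{ is a minimal prime of }I\}$. A monomial lies in $J(G)$ if and only if its support is a vertex cover of $G$, so the minimum degree $d$ of a monomial in $I$ equals the minimum size of a vertex cover of $G$, i.e.\ $d=\tau(G)$. By Corollary \ref{ara} we already know
\[ \ara(IS_{\mm})=\ara(J(G)S_{\mm})\leq \gdim(G), \]
so it is enough to show that $\gdim(G)\leq d+1$ and that $\gdim(G)\leq n-d+1$.

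For the first inequality I will invoke Remark \ref{bounds}, which gives $\gdim(G)\leq \nu(G)+1$, together with the elementary inequality $\nu(G)\leq \tau(G)$: any matching can be covered only by using at least one vertex of each of its edges, so $\nu(G)\leq \tau(G)=d$, and hence $\gdim(G)\leq \nu(G)+1\leq d+1$.

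For the second inequality I would go back to the definition of the graphical dimension. If $A=\{a_1,\ldots,a_r\}\subseteq V$ is a free parameter set then by definition $A$ is an independent set of vertices, so $r\leq \alpha(G)$, where $\alpha(G)$ denotes the independence number. By Gallai's identity $\alpha(G)+\tau(G)=|V(G)|\leq n$ (isolated vertices contribute harmlessly to both sides of the count, since a free parameter set cannot contain any isolated vertex), hence $r\leq \alpha(G)\leq n-\tau(G)=n-d$. Taking the maximum over all free parameter sets gives $\gdim(G)\leq n-d+1$. There is no real obstacle here; the only subtle point, which I would spell out, is matching the combinatorial invariant $d$ with $\tau(G)$ and verifying that isolated vertices of $G$ do not interfere with the bound $|A|\leq n-d$. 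Combining the two inequalities with the estimate from Corollary \ref{ara} yields $\ara(IS_{\mm})\leq \min\{d+1,n-d+1\}$, as required.
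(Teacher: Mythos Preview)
Your proof is correct. For the bound $\ara(IS_{\mm})\leq d+1$ you do exactly what the paper does: identify $I=J(G)$, observe that $d=\tau(G)$ is the minimum size of a vertex cover, and use $\gdim(G)\leq \nu(G)+1\leq \tau(G)+1$ together with Corollary~\ref{ara}.

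For the bound $\ara(IS_{\mm})\leq n-d+1$, however, your route is genuinely different from the paper's. You stay entirely within the combinatorial framework and bound $\gdim(G)$ itself: since a free parameter set is independent, its size is at most $\alpha(G)$, and Gallai's identity $\alpha(G)+\tau(G)=|V(G)|\leq n$ gives $\gdim(G)\leq n-d+1$; then Corollary~\ref{ara} finishes. The paper instead bounds $\ara(I)$ directly, with no reference to $\gdim$: it equips $S$ with the (non-homogeneous) ASL structure on square-free monomials ordered by reverse divisibility, notes that $I$ is generated by a poset ideal in this order, and invokes \cite[Proposition 5.20]{BrVe} to conclude $\ara(I)\leq n-d+1$. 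Your argument is more elementary and more uniform with the rest of the section (both halves pass through $\gdim$), while the paper's argument yields the slightly stronger statement $\ara(I)\leq n-d+1$ for the ideal itself, not just its localization at~$\mm$.
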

\begin{proof}
The inequality $\ara(IS_{\mm})\leq n-d+1$ is well known. One way to see this is by defining the following partial order on the set of the square-free monomials of $S$:
\[ m\leq n \ \ \iff \ \ n|m \ \ \mbox{ \ \ for any square-free monomials $m,n$ of }S. \]
It is easy to see that $S$ is an algebra with straightening laws (not homogeneous -- see \cite{BrVe} for the definition) on this poset over $K$. Notice that $I$ comes from a poset ideal. This means that $I=\Omega S$, where $\Omega$ is a subset of the square-free monomials such that: $n \in \Omega$, $m\leq n$ $\implies$ $m\in \Omega$. Then by \cite[Proposition 5.20]{BrVe} we get $\ara(I)\leq n-d+1$. This obviously implies that $\ara(IS_{\mm})\leq n-d+1$.

To prove the inequality $\ara(IS_{\mm})\leq d+1$, notice that $I=J(G)$  for a graph $G$ on $[n]$ ($\{i,j\}$ is an edge of $G$ if and only if $(x_i,x_j)$ is a minimal prime of $I$). Then Corollary \ref{ara} implies that $\ara(IS_{\mm})\leq \nu(G)+1$. It is well known and easy to show, that the matching number is at most the least cardinality of a vertex cover of $G$. It turns out that this number 
is equal to $d$.
\end{proof}

\section{Cohen-Macaulay Property and Castelnuovo-Mumford Regularity of the Edge Ideal}

An interesting  open problem, far to be solved, is to characterize in a combinatorial fashion all the Cohen-Macaulay graphs. The authors of \cite{HH} gave a complete answer when $G$ is bipartite. On the other hand if $G$ is Cohen-Macaulay then it is unmixed, and for bipartite unmixed graphs $\AG$ is the ordinary fiber cone of an ideal generated in one degree, so it   is a domain. This means that a bipartite Cohen-Macaulay graph satisfies the WSC. Since many of these graphs are not bipartite (see \cite{BV} for details), a natural extension of the theorem of Herzog and Hibi would be characterize all the  graphs satisfying the WSC which are Cohen-Macaulay. We are able to do this defining for each graph $G$ a \lq\lq nicer\rq\rq  ~graph $\pi(G)$. This association behaves like a projection. 

We start with a definition that makes sense by \cite[Lemma 2.1]{BV}.
\begin{definition}
We say that an edge $\{i,j\}$ of $G$ is a {\it right edge} if one of the following equivalent conditions is satisfied:
\begin{compactenum} 
\item for any basic $1$-cover $\aa$ of $G$ we have $\aa(i)+\aa(j)=1$;
\item for any basic $k$-cover $\aa$ of $G$ we have $\aa(i)+\aa(j)=k$;
\item if $\{i,i'\}$ and $\{j,j'\}$ are edges of $G$, then $\{i',j'\}$ is an edge of $G$ as well (in particular $i' \neq j'$).
\end{compactenum}
\end{definition}
Notice that a graph satisfies the WSC if and only if every vertex belongs to a right edge. We recall that these graphs are  of interest  because they are exactly  those graphs for which $\AG$ is a domain. In \cite{BV} the authors constructed from $G$ a graph $G^{0-1}$, possibly with isolated vertices, in order to characterize the graphs for which all the symbolic powers of $J(G)$ are generated in one degree.
We recall the definition:
\begin{compactenum} 
\item $V(G^{0-1})=V(G)$;
\item $E(G^{0-1})=\{ \{i,j\} \in E(G): \ \{i,j\} \mbox{ is a right edge of $G$}  \}$.
\end{compactenum}
It was proved in \cite{BV} that for any $G$ the graph $G^{0-1}$ is the disjoint union of some complete bipartite graphs $K_{a,b}$ (with $b\geq a\geq 1$) and some isolated points. Moreover $G^{0-1}$ has no isolated vertices if and only if $G$ satisfies the WSC.

We construct a new graph, that we will denote by $\pi(G)$, as follows: assume that
\[ G^{0-1}=\Big(\bigcup_{i=1}^m K_{a_i,b_i}\Big)~\bigcup ~\Big(\bigcup_{i=1}^t \{v_i\}\Big), \]
where the unions are disjoint unions of graphs, $b_i\geq a_i \geq 1$ and $v_i \in V(G)$. Denoting by $(A_i,B_i)$ the bipartition of $K_{a_i,b_i}$, we define the vertex set of $\pi(G)$ as 
\[ V(\pi(G))=\{A_i,B_i,\{v_j\}: \ i=1, \ldots ,m \mbox{ and }j=1,\ldots , t\}.\]
The edges of $\pi(G)$ are defined as follows: if $U,W$ belong to $V(\pi(G))$, then $\{U,W\}\in E(\pi(G))$ if and only if there is an edge of $G$ connecting a vertex of $U$ with a vertex of $W$.  By \cite[Lemma 2.6]{BV} the existence of an edge from $U$ to $W$ is equivalent to the fact that the induced subgraph of $G$ on the vertices of $U \cup W$ is bipartite complete. By \cite[Lemma 2.6.(1)]{BV} $\pi(G)$ has no loops. The notation $\pi$ comes from the fact that the operator $\pi$ is a projection, in the sense that $\pi(\pi(G))=\pi(G)$.
 
The following result is one of the reasons for introducing $\pi(G)$.
\begin{prop}\label{isom}
For every graph $G$, there is a well defined 1-1 correspondence 
\[\pi: \{\mbox{basic covers of }G\}\longrightarrow \{\mbox{basic covers of }\pi(G)\}\]
that associates to a basic $k$-cover $\aa$ of $G$ the basic $k$-cover $\pi(\aa)$ of $\pi(G)$, with $\pi(\aa)(U)= \aa(u)$ for some $u\in U$.
Moreover this correspondence induces a graded isomorphism
\[ \AG \cong \bar{A}(\pi(G)).\]
\end{prop}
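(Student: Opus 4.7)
I would prove the 1-1 correspondence $\pi$ in stages and then deduce the algebra isomorphism from compatibility with the multiplication tables. First, I would check that $\pi(\aa)$ is well-defined, i.e., that $\aa$ is constant on every block $U \in V(\pi(G))$. This is immediate for singletons $\{v_j\}$; for $U = A_i$ (the case $B_i$ is symmetric), given $u,u' \in A_i$, picking any $b \in B_i$, both $\{u,b\}$ and $\{u',b\}$ are right edges of $G$ (as edges of $G^{0-1}$), so condition (2) of the right-edge definition gives $\aa(u)+\aa(b)=k=\aa(u')+\aa(b)$ and hence $\aa(u)=\aa(u')$. A key combinatorial lemma that I would record separately is that no edge of $G$ lies inside a single block $U$: if $\{u,u'\} \subseteq A_i$ were such an edge, then applying condition (3) of the right edge $\{u,b\}$ (for any $b \in B_i$) to the edges $\{u,u'\}$ and $\{u',b\}$ would force $u' \neq u'$, absurd.

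Next, to see that $\pi(\aa)$ is a basic $k$-cover of $\pi(G)$, I would invoke \cite[Lemma 2.6]{BV}: an edge $\{U,W\}$ of $\pi(G)$ exists iff $G$ induces a complete bipartite graph on $U \cup W$, so every pair $u \in U$, $w \in W$ is an edge of $G$ and $\pi(\aa)(U)+\pi(\aa)(W)=\aa(u)+\aa(w) \geq k$. For basicness, given $U$ with $\pi(\aa)(U)>0$ and any $u \in U$, the basicness of $\aa$ yields $w$ with $\{u,w\} \in E(G)$ and $\aa(u)+\aa(w)=k$; the combinatorial lemma ensures $w$ lies in a block $W \neq U$, so $\{U,W\} \in E(\pi(G))$ realizes the tightness of $\pi(\aa)$ at $U$. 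For the inverse, given a basic $k$-cover $\bb$ of $\pi(G)$, set $\aa(v):=\bb(V)$ whenever $v \in V$; the analogous arguments (again using bipartite completeness and that every edge of $G$ joins distinct blocks) show that $\aa$ is a basic $k$-cover of $G$ with $\pi(\aa)=\bb$. This establishes the bijection.

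For the graded isomorphism $\AG \cong \bar{A}(\pi(G))$, I would send $x_1^{\aa(1)} \cdots x_n^{\aa(n)} t^k$ to the generator of $\bar{A}(\pi(G))$ indexed by $\pi(\aa)$ and extend linearly. Since $\pi(\aa+\bb)=\pi(\aa)+\pi(\bb)$ pointwise, the two multiplication tables match provided that $\aa+\bb$ is a basic $(k+h)$-cover of $G$ if and only if $\pi(\aa+\bb)$ is a basic $(k+h)$-cover of $\pi(G)$. This equivalence is proved by repeating verbatim the basicness/non-basicness arguments from the previous paragraph in both directions, noting that $\aa+\bb$ is still constant on each block. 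The main obstacle is the combinatorial lemma ruling out $G$-edges within a single block; once that is in hand, well-definedness, covering, basicness, invertibility, and multiplicative compatibility all flow quite directly.
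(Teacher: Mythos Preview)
Your proposal is correct and follows the same approach as the paper's proof, simply with the details made explicit where the paper says ``straightforward'' and ``easy to see''. In particular, your combinatorial lemma (no $G$-edge inside a single block) is exactly what the paper invokes when it cites \cite[Lemma~2.6.(1)]{BV} to conclude that $\pi(G)$ has no loops, and your use of bipartite completeness across blocks matches the paper's reference to \cite[Lemma~2.6]{BV}.
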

\begin{proof}
Using the fact that the edges between each $A_i$ and $B_i$ are right, it is straightforward to check that $\aa$ has the same value on all vertices in $A_i$ (resp. in $B_i$) for every $i=1, \ldots ,m$.
This implies  that $\pi$ is a well defined function. It is easy to see that $\pi$ is a bijection between the basic $k$-covers of $G$ and those of $\pi(G)$; moreover this operation is compatible with the multiplicative structure on $\AG$ and of $\bar{A}(\pi(G))$. Therefore we also have a graded isomorphism between the algebras $\AG$ and $\bar{A}(\pi(G))$.
\end{proof}

\begin{remark}
\begin{itemize}
\item[1.]
The previous Proposition provides another proof of the fact that $\AG$ is a Hibi ring when $G$ is a bipartite graph satisfying the WSC. In fact in this case $\pi(G)$ is unmixed bipartite, so it is known that $\bar{A}(\pi(G))$ is a Hibi ring (for instance see \cite[Theorem 3.3]{BCV}). 
\item[2.]
Proposition \ref{isom} shows also that $\P(G)=\P(\pi(G))$. So in order to study $\P(G)$ it can be convenient to pass to the projection and work on  a graph with less vertices.
\end{itemize}
\end{remark}

In some cases $\pi(G) = G$, for instance if $G$ is a cycle on $n\neq 4$ vertices. The usefulness of $\pi(G)$ arises especially when $G$ satisfies the WSC. As we already said in the above remark,  in this case $\pi(G)$ is unmixed. Less trivially, we can strengthen this fact, but first we need a technical lemma.

\begin{lemma}\label{poset}
Let $G$ be a graph satisfying the WSC. Then there exists a unique perfect matching $M=\{\{u_i,v_i\}:i=1, \ldots, r\}$ of $\pi(G)$, where $r=|\pi(G)|/2$. Moreover it is possible to label the vertices of $\pi(G)$ in such a way that $\{v_1, \ldots ,v_r\}$ is an independent set of vertices of $\pi(G)$ and that the relation $v_i\prec v_j$ if and only $\{u_i,v_j\}$ is an edge defines a partial order on $V=\{v_1, \ldots ,v_r\}$.
\end{lemma}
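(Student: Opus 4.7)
The plan rests on the idempotence $\pi(\pi(G))=\pi(G)$. Since $G$, and therefore $\pi(G)$, satisfies the WSC, the graph $\pi(G)^{0-1}$ has no isolated vertex; moreover any connected component of $\pi(G)^{0-1}$ larger than $K_{1,1}$ would be collapsed to a single vertex on a second application of $\pi$, violating idempotence. Thus $\pi(G)^{0-1}$ is a perfect matching of $\pi(G)$, say $M=\{\{x_i,y_i\}:i=1,\ldots,r\}$, and every vertex of $\pi(G)$ lies on a \emph{unique} right edge.

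The core step is a cross-edge obstruction: for $i\neq j$, the pattern $\{x_i,y_j\},\{y_i,x_j\}\subseteq E(\pi(G))$ cannot occur. If it did, then for every basic $1$-cover $\aa$ of $\pi(G)$ the inequalities $\aa(x_i)+\aa(y_j)\geq 1$ and $\aa(y_i)+\aa(x_j)\geq 1$, combined with the equalities $\aa(x_i)+\aa(y_i)=1=\aa(x_j)+\aa(y_j)$ forced by the right edges, would give $\aa(x_i)=\aa(x_j)$ identically, hence $\aa(x_i)+\aa(y_j)\equiv 1$. This would promote $\{x_i,y_j\}$ to a right edge, contradicting that $x_i$ already lies on the unique right edge $\{x_i,y_i\}$.

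For the labeling, I fix any basic $1$-cover $\aa$ of $\pi(G)$ (it exists since $\pi(G)$ has no isolated vertex) and set $u_i$ to be the vertex of the $i$-th pair with $\aa=1$ and $v_i$ the one with $\aa=0$. Independence of $\{v_1,\ldots,v_r\}$ is automatic, for any edge $\{v_i,v_j\}$ would sum to $0+0$ on a $1$-cover. For the relation $v_i\prec v_j\iff\{u_i,v_j\}\in E(\pi(G))$: transitivity follows from applying the third characterization of the right edge $\{u_j,v_j\}$ to the incident edges $\{u_j,v_k\}$ and $\{v_j,u_i\}$, yielding $\{u_i,v_k\}\in E$; antisymmetry is exactly the forbidden cross pattern. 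Reflexivity is interpreted as $v_i\preceq v_i$ via $\{u_i,v_i\}\in M$.

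Uniqueness of $M$ then follows by processing the vertices in a linear extension $v_{\sigma(1)},\ldots,v_{\sigma(r)}$ of $\prec$: by the extension order and $V$-independence, the neighbors of $v_{\sigma(j)}$ in $\pi(G)$ lie in $\{u_{\sigma(1)},\ldots,u_{\sigma(j)}\}$, with $u_{\sigma(j)}$ always included. In any perfect matching of $\pi(G)$, the $\prec$-minimal $v_{\sigma(1)}$ must be matched to its unique neighbor $u_{\sigma(1)}$; inductively, once $u_{\sigma(1)},\ldots,u_{\sigma(j-1)}$ have been used, only $u_{\sigma(j)}$ remains available for $v_{\sigma(j)}$. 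Hence the only perfect matching of $\pi(G)$ is $M$. The principal difficulty is the cross-edge obstruction; transitivity, antisymmetry and uniqueness are clean bookkeeping once that case analysis is in hand.
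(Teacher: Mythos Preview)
Your proof is correct, with one harmless slip of phrasing: a $K_{a,b}$ component of $\pi(G)^{0-1}$ with $a+b>2$ is collapsed by $\pi$ to \emph{two} vertices, not one; but since that is still strictly fewer than $a+b$, idempotence is violated all the same and your conclusion stands.

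Your route differs from the paper's in three places. For the existence of the perfect matching of right edges, the paper reads it off directly from the construction $V(\pi(G))=\{A_i,B_i\}$, whereas you deduce it from $\pi(\pi(G))=\pi(G)$; these are two sides of the same fact. For uniqueness, the paper argues that any perfect matching $N$ of $\pi(G)$ must consist entirely of right edges (since $\sum_v\aa(v)=r$ forces equality on each edge of $N$) and hence $N\subseteq M$; you instead first build the partial order and then peel off the $\prec$-minimal vertex inductively. Your argument is longer but self-contained; the paper's is a one-liner once the right-edge count is in hand. For the independent labeling, the paper performs an iterative swap of $u_j$ and $v_j$ whenever $v_j$ is joined to some earlier $v_i$; your device of fixing a single basic $1$-cover and letting it decide which endpoint of each right edge is $u$ and which is $v$ is cleaner and gives independence of $\{v_1,\ldots,v_r\}$ in one stroke. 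Finally, for antisymmetry the paper invokes \cite[Lemma 2.6(3)]{BV}, while your cross-edge obstruction reproves exactly that lemma in situ; the content is the same, but your version keeps the argument internal to the paper.
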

\begin{proof}
Since $G$ satisfies the WSC,  $G^{0-1}$ has no isolated points, so we obtain a perfect matching $M=\{\{u_i,v_i\}:i=1, \ldots ,r\}$ directly by  construction. Moreover, since the edges of $M$ are right, it immediately follows that for each $1$-cover $\aa$ of $\pi(G)$ we have $\sum_{v\in \pi(G)}\aa(v)=r$. This implies that if $N$ is another perfect matching of $\pi(G)$ then the $r$ edges of $N$ must be right. But the only right edges of $\pi(G)$ are those of $M$, therefore $M=N$.  

We  prove now that we can assume that $\{v_1, \ldots ,v_r\}$ is an independent set of vertices. In fact, suppose that there exist $i<j$ such that $\{v_i,v_j\}$ is an edge, and take the least $j$ with this property. 
First notice that there exists no edge $\{u_j,v_k\}$ of $\pi(G)$ with $k<j$.  The existence of such an edge  would imply that also $\{v_k,v_i\}$ is an edge (as $\{u_j,v_j\}$ is right) and this would contradict the minimality of $j$.
Now  switch $v_j$ and $u_j$. As we have seen that  there are no edges $\{u_j,v_k\}$ with $k<j$,
  we can proceed with the same argument and assume that $\{v_1, \ldots ,v_r\}$ is an independent set of vertices.

To conclude we have to  show that the relation
\[v_i\prec v_j \ \iff \ \{u_i, v_j\} \mbox{ is an edge of }\pi(G)\]
defines a partial order on $V$.
\begin{compactenum}
\item {\it Reflexivity} is obvious.
\item {\it Transitivity} is straightforward because $\{u_i,v_i\}$ is a right edge of $\pi(G)$, $\forall$ $i=1, \ldots ,r$. 
\item {\it Anti-symmetry}: suppose there exist $i\neq j$ such that $v_i\prec v_j$ and $v_j \prec v_i$. Then $\{u_i,v_j\}$ and $\{u_j,v_i\}$ are both edges of $\pi(G)$. This contradicts \cite[Lemma 2.6, point (3)]{BV}.
\end{compactenum} 
\end{proof}

We recall that if $I\subseteq S$ is a square-free monomial ideal we can associate to it the simplicial complex $\Delta(I)$ on the set $[n]$ such that $\{i_1, \ldots ,i_s\}$ belongs to $\Delta(I)$ if and only if $x_{i_1} \cdots x_{i_s}$ does not belong to $I$.

To prove the next result we need a theorem  from \cite{MRV}, that we are going to state in the case of graphs. We recall that a graph $G$ has a perfect matching of \emph{K\"onig type} if it has a perfect matching of cardinality $\height(I(G))$.

\begin{thm}\label{MRV} (Morey, Reyes and Villareal \cite[Theorem 2.8]{MRV}). Let $G$ be an unmixed graph which admits a matching of K\"onig type. Assume that for any vertex $v$ the induced subgraph on all the vertices of $G$ but $v$ has a leaf. Then $\Delta(I(G))$ is shellable. 
\end{thm}

Thus we are ready to show the following.

\begin{thm}\label{CM}
Let $G$ be a graph satisfying the WSC, and let $\Delta=\Delta(I(\pi(G))$. Then $\Delta$ is shellable. In particular $\pi(G)$ is a Cohen-Macaulay graph.
\end{thm}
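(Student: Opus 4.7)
The plan is to verify the three hypotheses of Theorem~\ref{MRV} applied to $\pi(G)$: namely, that $\pi(G)$ is unmixed, admits a matching of K\"onig type, and has the property that $\pi(G)\setminus w$ contains a leaf for every vertex $w$. The first two are essentially immediate from Lemma~\ref{poset}: every basic $1$-cover $\aa$ of $\pi(G)$ satisfies $\aa(u_i)+\aa(v_i)=1$ on each right edge of the perfect matching $M=\{\{u_i,v_i\}:i=1,\ldots,r\}$, so all minimal vertex covers have cardinality~$r$. Thus $\pi(G)$ is unmixed with $\height I(\pi(G))=r=|M|$, and $M$ is of K\"onig type.

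The substantive work lies in the leaf condition. My approach is to exploit the partial order $\prec$ on $V_1=\{v_1,\ldots,v_r\}$ furnished by Lemma~\ref{poset}. The key observation is that every minimal element $v_a$ of $(V_1,\prec)$ is already a leaf of $\pi(G)$: $V_1$ is independent, and no $u_k$ with $k\neq a$ is adjacent to $v_a$ (by minimality and the definition of $\prec$). Consequently, if $(V_1,\prec)$ has at least two minimal elements, then for every $w$ at least one of them, together with its matching partner, is disjoint from $\{w\}$, yielding a leaf of $\pi(G)\setminus w$.

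The hard case, which I expect to be the main obstacle, is when $(V_1,\prec)$ has a unique minimum $v_a$ and $w\in\{v_a,u_a\}$. To handle it, I would first prove that, whenever $(V_1,\prec)$ has a minimum, the set $V_2=\{u_1,\ldots,u_r\}$ is independent in $\pi(G)$. The argument proceeds by contradiction: if $\{u_i,u_j\}$ is an edge with $i\neq a$, then $v_a\prec v_i$ makes $u_a$ a neighbor of $v_i$, and the right-edge property of $\{u_i,v_i\}$ applied with $u'=u_j$, $v'=u_a$ yields $\{u_j,u_a\}\in E(\pi(G))$; a second application of the same property at $\{u_j,v_j\}$ (or a single application in the boundary case $j=a$) then forces the forbidden self-loop $\{u_a,u_a\}$.

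With $V_2$ known to be independent, the two remaining subcases follow. If $w=u_a$, a minimal element $v_k$ of $V_1\setminus\{v_a\}$ has only $u_k$ and $u_a$ as neighbors in $\pi(G)$ (since its unique predecessor in $V_1$ is $v_a$), and therefore becomes a leaf after deleting $u_a$. If $w=v_a$, a maximal element $v_m\neq v_a$ of $V_1$ yields $u_m$ with $v_m$ as its only neighbor in $\pi(G)$---no $v_k$ lies above $v_m$ and no $u$-$u$ edges exist---so $u_m$ is already a leaf of $\pi(G)$ and remains so in $\pi(G)\setminus v_a$. Handling the trivial case $r=1$ separately and invoking Theorem~\ref{MRV} then yields shellability of $\Delta$, from which Cohen--Macaulayness of $\pi(G)$ follows.
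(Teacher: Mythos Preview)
Your proof is correct and follows the same overall strategy as the paper: verify the hypotheses of Theorem~\ref{MRV} for $\pi(G)$, with unmixedness and the K\"onig-type matching coming directly from the perfect matching of right edges in Lemma~\ref{poset}, and the real work concentrated in the leaf condition.

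Where you diverge from the paper is in the execution of that leaf condition. The paper fixes a linear extension of $\prec$, observes that $v_1$ is a leaf, and then handles only the two problematic removals $w=u_1$ and $w=v_1$; for $w=v_1$ it runs an ad~hoc contradiction argument, chasing minimal indices $i$ with $\{u_i,u_r\}\in E$ and $k$ with $\{u_k,v_r\}\in E$, pushing both down to $1$ via the right-edge property, and finally deriving the loop $\{u_1,u_1\}$ from the rightness of $\{u_r,v_r\}$. Your route instead isolates a clean structural fact---that $V_2=\{u_1,\ldots,u_r\}$ is independent whenever $(V_1,\prec)$ has a minimum---and then reads off explicit leaves ($v_k$ for a $\prec$-minimal $v_k$ above $v_a$ when $w=u_a$; $u_m$ for a $\prec$-maximal $v_m$ when $w=v_a$). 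Both arguments ultimately exploit the same mechanism (the right-edge condition forcing a forbidden loop), but your intermediate lemma is a reusable statement about $\pi(G)$ rather than a one-off chain of deductions, at the modest cost of an extra case split on the number of $\prec$-minimal elements. One small point: you should state explicitly that in the ``unique minimum'' case with $w\notin\{v_a,u_a\}$ the leaf $v_a$ survives---you clearly intend this, but the text jumps straight from ``$\geq 2$ minima'' to ``unique minimum and $w\in\{v_a,u_a\}$''.
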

\begin{proof}
We want to use Theorem \ref{MRV}. It is clear  that $\pi(G)$ is unmixed because it has a perfect matching of right edges. Furthermore such a matching is obviously of K\"onig type.  It remains to show that for any $v \in V(\pi(G))$, the induced subgraph of $\pi(G)$ on $V(\pi(G))\setminus \{v\}$ has a leaf. Label the vertices of $\pi(G)$ as in Lemma \ref{poset} and in such a  way that $v_i \prec v_j$ provided that $i\leq j$. Since $v_1$ is a leaf, the only problem could arise when we remove from $\pi(G)$ either  $u_1$ or $v_1$. If we remove $u_1$, then  $v_2$ becomes a leaf, so we must show that the graph induced by $\pi(G)$ on $V(\pi(G))\setminus \{v_1\}$ has a leaf. \\
Suppose there are no leaves. Then, denoting by $r=|V(\pi(G))|/2$, we can choose the minimum $i$ such that $\{u_i,u_r\}$ is an edge (because $u_r$ is not a leaf and by Lemma \ref{poset} these are the only possible edges, different from $\{u_r,v_r\}$, containing $u_r$). We claim that $i=1$. If not, since $v_i$ is not a leaf, there exists $j<i$ such that $\{u_j,v_i\}$ is an edge. But, since $\{u_i,v_i\}$ is a right edge, it follows that $\{u_j,u_r\}$ is an edge, contradicting the minimality of $i$. Now, since $v_r$ is not a leaf, there exists a minimal $k<r$ such that $\{u_k,v_r\}$ is a leaf. Arguing as above we have that $k=1$. Then $\{u_1,u_r\}$ and $\{u_1,v_r\}$ are both edges, and this contradicts the fact that $\{u_r,v_r\}$ is right.

Therefore $\Delta$ is shellable by Theorem \ref{MRV}, and it is well known that this implies that $\pi(G)$ is a Cohen-Macaulay graph (for instance see the book of Bruns and Herzog \cite[Theorem 5.1.13]{BH}).  
\end{proof}

For the following result we recall that an ideal $I\subseteq S$ is connected in codimension $1$ if any two minimal primes $\wp,\wp'$ of $I$ are $1$-connected: i.e. there exists a path $\wp=\wp_1,\ldots ,\wp_m=\wp'$ of minimal primes of $I$ such that $\height(\wp_i+\wp_{i+1})=\height(I)+1$. If $I=I_{\Delta}$ is a square-free monomial ideal, then $I$ is connected in codimension $1$ if and only if $\Delta$ is strongly connected, i.e. if and only if you can walk from a facet to another passing through faces of codimension $1$ in $\Delta$.

\begin{thm}
Let $G$ be a graph satisfying the WSC and set $\Delta=\Delta(I(G))$ the simplicial complex associated to the edge ideal. The following  conditions are equivalent:
\begin{compactenum}
\item $G$ has a unique perfect matching;
\item $G$ has a unique perfect matching of right edges;
\item $\pi(G)=G$;
\item $\Delta$ is shellable;
\item $G$ is Cohen-Macaulay;
\item $I(G)$ is connected in codimension 1.
\end{compactenum}
\end{thm}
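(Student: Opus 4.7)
I split the argument into the combinatorial block $(1) \Leftrightarrow (2) \Leftrightarrow (3)$ and the cycle $(3) \Rightarrow (4) \Rightarrow (5) \Rightarrow (6) \Rightarrow (3)$. The three combinatorial conditions are all equivalent to the statement that every component of $G^{0-1} = \bigsqcup_i K_{a_i, b_i}$ is $K_{1,1}$. Lemma \ref{poset} applied to $\pi(G) = G$ immediately yields $(3) \Rightarrow (1)$ and $(3) \Rightarrow (2)$. For $(2) \Rightarrow (3)$, any perfect matching of right edges must match each $K_{a_i, b_i}$ internally, forcing $a_i = b_i$, and uniqueness then forces $a_i = 1$. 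For $(1) \Rightarrow (3)$ I use the key fact that any two vertices $b, b' \in B_i$ in the same component of $G^{0-1}$ satisfy $N_G(b) = N_G(b')$: applying the right-edge condition of $\{a, b\}$ (for any $a \in A_i$) to the edges $\{a, b'\}$ and $\{b, c\}$ yields $\{b', c\} \in E$. Hence if $|B_i| \geq 2$, swapping the partners of $b$ and $b'$ in any perfect matching of $G$ produces a second perfect matching, contradicting $(1)$.

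The cycle is mostly automatic: $(3) \Rightarrow (4)$ is Theorem \ref{CM}; $(4) \Rightarrow (5)$ is the standard implication that shellability implies Cohen-Macaulayness (\cite[Theorem 5.1.13]{BH}); $(5) \Rightarrow (6)$ follows from Hartshorne's connectedness theorem applied to $S/I(G)$.

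The main step is $(6) \Rightarrow (3)$, which I prove by contraposition. If $\pi(G) \neq G$, pick twin vertices $b \neq b' \in B_i$ as above (possible since $b_i \geq a_i \geq 1$). The twin condition forces every minimal vertex cover of $G$ to contain either both or neither of $b, b'$: if $b \in C$ and $b' \notin C$ for a minimal cover $C$, then $N_G(b') = N_G(b) \subseteq C$ would make $C \setminus \{b\}$ still a cover. Consequently, any codimension $1$ adjacency of minimal primes (corresponding to $|C \triangle C'| = 2$) preserves the state of $\{b, b'\}$, and therefore so does any codimension $1$ path. The key obstacle is to exhibit minimal vertex covers $C_1, C_2$ of $G$ with $B_i \subseteq C_1$ and $B_i \cap C_2 = \emptyset$. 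For this I pass to $\pi(G)$: since $\{A_i, B_i\}$ is an edge of $\pi(G)$, extending the independent set $\{A_i\}$ to a maximal one and taking the complement yields a minimal vertex cover of $\pi(G)$ containing $B_i$ but not $A_i$; extending $\{B_i\}$ similarly yields one with $A_i$ in and $B_i$ out. Lifting these through Proposition \ref{isom} produces the required $C_1$ and $C_2$, contradicting (6).
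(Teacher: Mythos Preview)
Your proof is correct, and the algebraic cycle $(3)\Rightarrow(4)\Rightarrow(5)\Rightarrow(6)\Rightarrow(3)$ is handled exactly as in the paper (Theorem~\ref{CM}, \cite[Theorem 5.1.13]{BH}, Hartshorne's connectedness theorem, and a contrapositive argument for $(6)\Rightarrow(3)$). The differences lie in the combinatorial block and in the invariant used for $(6)\Rightarrow(3)$.

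For $(1)\Leftrightarrow(2)\Leftrightarrow(3)$ the paper argues $(3)\Rightarrow(2)$ via Lemma~\ref{poset}, $(2)\Rightarrow(3)$ by invoking \cite[Theorem~2.8]{BV} (unmixedness forces each component of $G^{0-1}$ to be $K_{a,a}$, and then non-uniqueness if $a\ge 2$), $(1)\Rightarrow(2)$ by a direct WSC argument, and finally $(2)\Rightarrow(1)$ by the detour through Cohen--Macaulayness (hence unmixedness, hence every perfect matching consists of right edges). Your route is more self-contained: you read off $(3)\Rightarrow(1)$ and $(3)\Rightarrow(2)$ from Lemma~\ref{poset}, get $(2)\Rightarrow(3)$ from the elementary matching count in $K_{a,b}$, and prove $(1)\Rightarrow(3)$ via the twin property $N_G(b)=N_G(b')$ for $b,b'\in B_i$. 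This avoids both the external citation and the detour through the algebraic implications.

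For $(6)\Rightarrow(3)$ the two arguments are close in spirit but use different invariants. The paper takes $i,j\in A$ with $|A|\ge 2$ and shows that along any codimension-$1$ path one must hit a cover $\gamma$ with $\gamma(i)=1$, $\gamma(j)=0$, which forces $\gamma\equiv 1$ on $B$ and contradicts the right-edge equality $\gamma(i)+\gamma(b)=1$. Your invariant ``both or neither of $b,b'$'' is slightly cleaner: it follows immediately from the twin property and minimality, and the fact that a codimension-$1$ step satisfies $|C\bigtriangleup C'|=2$ (which you correctly identify, since $|C\cup C'|=\height(I)+1$ forces $|C|=|C'|=\height(I)$) shows the state cannot flip. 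Your construction of $C_1,C_2$ via maximal independent sets in $\pi(G)$ lifted through Proposition~\ref{isom} is also a tidy replacement for the paper's ad hoc ``it is easy to construct'' step.
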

\begin{proof}
\mbox{$3.\Rightarrow4.$}  is Theorem \ref{CM}. \mbox{$4.\Rightarrow5.$}  follows by  \cite[Theorem 5.1.13]{BH}. \mbox{$5.\Rightarrow6.$}  is a general fact proved by Hartshorne in \cite{Ha1}. \mbox{$3.\Rightarrow2.$}  follows  immediately from Lemma \ref{poset}.

We want to show that \mbox{$6.\Rightarrow3.$}  Suppose $\pi(G)\neq G$. This means that there is a bipartite complete subgraph of $G$, say $H$, with more than two vertices and such that any edge of $H$ is a right edge of $G$. Let $V(H)=A\cup B$ be the bipartition of the vertex set of $H$, and assume that $|A|\geq 2$. It is easy to construct a basic $1$-cover $\aa$ that associates  $1$ to the vertices in $A$ and $0$ to the vertices in $B$, and a basic $1$-cover $\bb$ that associates $0$ to the vertices in $A$ and $1$ to the ones in $B$. Consider the two ideals of $S$
$$
\begin{array}{ccc}
\wp_{\aa}&=&(x_i: \aa(i)=1)\\
\wp_{\bb}&=&(x_i: \bb(i)=1).
\end{array}
$$
The ideals $\wp_{\aa}$ and $\wp_{\bb}$ are minimal prime ideals of $I(G)$. We claim that they are not $1$-connected. If they were, there would be a minimal prime ideal $\wp$ of $I(G)$ such that there exist $i,j\in A$ with $x_i\in \wp$ and $x_j\notin \wp$. Therefore the basic $1$-cover $\cc$ associated to $\wp$ with $\cc(i) =1$ and $\cc(j)=0$. Because $\cc$ is a $1$-cover, it  must also associate $1$ to every vertex of $B$, and this contradicts the fact that $H$ consists of right edges.

Now we are going to show that \mbox{$2.\Rightarrow3.$}  If $G$ has a perfect matching of right edges it is straightforward to check that it is unmixed. By \cite[Theorem 2.8]{BV},  the connected components of $G^{0-1}$ are all of the type $K_{a,a}$ for some $a\geq 1$. If $G$ were different from $\pi(G)$, then at least one of the $a$'s would be greater than 1. So we could find another perfect matching of right edges of $G$ by changing the matching of $K_{a,a}$ induced by the initial matching on $G$. 

For the implication \mbox{$1.\Rightarrow2$}, let $M=\{\{a_1,b_1\},\ldots ,\{a_m,b_m\}\}$ be the unique perfect matching of $G$. Suppose that an edge in $M$, say $\{a_1,b_1\}$, is not right. Since $G$ satisfies the WSC there is an $i>1$ such that $\{a_1,b_i\}$ (resp. $\{a_1,a_i\}$) is a right edge. But then $\{b_1,a_i\}$ (resp. $\{b_1,b_i\}$) is  an edge by the weak square condition. So $M'=\{\{a_1,b_i\},$ $\{a_2,b_2\},\ldots ,$ $\{a_i,b_1\},$ $ \ldots ,$ $\{a_m,b_m\}\}$ (resp. $M'=\{\{a_1,a_i\},$ $\{a_2,b_2\},\ldots ,\{b_i,b_1\},$ $ \ldots ,\{a_m,b_m\}\}$) is another matching, a contradiction.

It remains to show that \mbox{$2.\Rightarrow1.$}  But we already proved that if  2. holds then $G$ is Cohen-Macaulay. In particular $G$ is unmixed, so any other perfect matching of $G$ is forced to consist of right edges.
\end{proof}

Whereas graphs  whose edge ideal has a linear resolution have been completely characterized by Fr\"oberg in \cite{Fr}, it is still an open problem (even in the bipartite case) to characterize in a combinatorial fashion the Castelnuovo-Mumford regularity of the edge ideal. A general result in \cite{Ka} asserts that a lower bound for $\reg(S/I(G))$ is the maximum size of a pairwise disconnected set of edges of $G$. Moreover by the present paper it easily follows that the graphical dimension of $G$ provides an upper bound for $\reg(S/I(G))$ (see the remark below). In \cite{Zh} Zheng showed that if $G$ is a tree, then $\reg(S/I(G))$ is actually equal to the maximum number of disconnected edges of $G$. Later, in \cite{HV}, H\`a and Van Tuyl showed that the same conclusion holds true for chordal graphs, and recently, the author of \cite{Ku} showed this equality in the bipartite unmixed case, too. 
As another application of the operator $\pi$, we show in Theorem \ref{cmr} that  this equality holds also for any bipartite graph satisfying the WSC, extending the result of Kummini. First notice that to prove his theorem Kummini defines a new graph, called the acyclic reduction, starting from a bipartite unmixed graph (\cite[Discussion 2.8]{Ku}). It is possible to show that this new graph coincides with $\pi(G)$. So in some sense $\pi(G)$ can be seen as an extension to the class of all graphs of the acyclic reduction defined in \cite{Ku}.

\begin{remark}
We showed in Corollary \ref{ara} that, for any graph $G$, we have $\ara(J(G))\leq \gdim(G)$. But by a result in \cite{Ly} the cohomological dimension of $J(G)$ is equal to the projective dimension of $S/J(G)$. Since the cohomological dimension is a lower bound for the arithmetical rank, we have that $\pd(S/J(G))\leq \gdim(G)$. As $I(G)$ is the Alexander dual of $J(G)$, it follows by \cite{Te} that
\[\reg(S/I(G))\leq \gdim(G)-1.\]
Since $\gdim(G)-1$ is less than or equal to the matching number of $G$ by definition, the above inequality strengthens \cite[Theorem 1.5]{HV}.  
\end{remark}

\begin{lemma}\label{reg}
Let $G$ be any graph. Then
\[ \reg(S/I(G))=\reg(S'/I(\pi(G)), \]
where $S'=K[y_1,\ldots ,y_p]$ is the polynomial ring in $p=|V(\pi(G))|$ variables over $K$. 
\end{lemma}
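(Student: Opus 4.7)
The plan is to combine Hochster's formula with a homotopy-equivalence argument between the induced subcomplexes of the independence complexes of $G$ and of $\pi(G)$. Recall Hochster's formula: for any simplicial complex $\Delta$ on a finite set $V$ with Stanley--Reisner ideal $I_\Delta \subseteq K[V]$, one has
\[
\reg(K[V]/I_\Delta) \;=\; 1 + \max\bigl\{\, k \ge -1 \,:\, \exists\, \sigma \subseteq V \text{ with } \tilde{H}_k(\Delta_\sigma; K) \ne 0 \,\bigr\}.
\]
Applied to the independence complexes $\Delta(I(G))$ and $\Delta(I(\pi(G)))$, it thus suffices to show that the set of integers $k$ for which some induced subcomplex $\Delta(I(G))_\sigma$ has nonzero $\tilde{H}_k$ coincides with the corresponding set for $\Delta(I(\pi(G)))$.

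The key combinatorial input is that, by the construction of $\pi(G)$ and by \cite[Lemma 2.6]{BV}, the vertex set $V(G)$ is partitioned into the blocks $U \in V(\pi(G))$, each block $U$ is an independent set in $G$, and for distinct $U,W$ the edges of $G$ between $U$ and $W$ form a complete bipartite subgraph exactly when $\{U,W\} \in E(\pi(G))$, being empty otherwise. Hence $F \subseteq V(G)$ is independent in $G$ if and only if $\tau_F := \{U : F \cap U \ne \emptyset\}$ is independent in $\pi(G)$. For each $\sigma \subseteq V(G)$, set $\sigma_U = U \cap \sigma$ and $\tau(\sigma) = \{U : \sigma_U \ne \emptyset\}$; the preceding equivalence identifies $\Delta(I(G))_\sigma$ with the ``simplicial blow-up'' of $\Delta(I(\pi(G)))_{\tau(\sigma)}$ in which each vertex $U$ is replaced by the full simplex on $\sigma_U$. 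I will then consider the natural simplicial projection $p : \Delta(I(G))_\sigma \to \Delta(I(\pi(G)))_{\tau(\sigma)}$ sending each $v \in \sigma$ to the unique $U \ni v$. For every face $\tau'$ in the target, the closed preimage $p^{-1}(\tau')$ is the full simplex on $\bigcup_{U \in \tau'} \sigma_U$, which is nonempty and contractible. By Quillen's fiber lemma, $p$ is a homotopy equivalence and induces isomorphisms on reduced homology in every degree.

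Since every $\tau \subseteq V(\pi(G))$ arises as $\tau(\sigma)$ for $\sigma := \bigcup_{U \in \tau} U$, the two maxima in Hochster's formula coincide and the equality of regularities follows. The main obstacle is the homotopy equivalence above: Quillen's fiber lemma provides the cleanest conceptual argument, but one could alternatively produce an explicit deformation retraction by choosing a representative $u_U \in \sigma_U$ for each $U \in \tau(\sigma)$ and successively collapsing every other $v \in \sigma_U$ onto $u_U$, using that $v$ and $u_U$ are twins in $G[\sigma]$ (their open neighborhoods in $G[\sigma]$ coincide) and hence have identical links in $\Delta(I(G))_\sigma$.
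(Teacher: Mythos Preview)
Your argument is correct, and it takes a genuinely different route from the paper's proof. The paper works on the Alexander dual side: it defines the ring map $\phi:S'\to S$, $y_i\mapsto \prod_{j\in V_i}x_j$, observes that $\phi(J(\pi(G)))S=J(G)$ via the basic-cover correspondence of Proposition~\ref{isom}, and uses that the monomials $m_i=\prod_{j\in V_i}x_j$ form a regular sequence (Hartshorne's criterion) to conclude that $S$ is flat over $S'$. Hence a minimal free resolution of $S'/J(\pi(G))$ tensors up to one of $S/J(G)$, so all total Betti numbers---in particular the projective dimensions---agree, and Terai's duality \cite{Te} converts this into the desired regularity equality for the edge ideals.

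Your approach stays entirely on the $I(G)$ side: Hochster's formula reduces the question to comparing the reduced homology of induced subcomplexes of the two independence complexes, and the ``blow-up'' description of $\Delta(I(G))_\sigma$ over $\Delta(I(\pi(G)))_{\tau(\sigma)}$ (each vertex $U$ replaced by the full simplex on $\sigma_U$) gives the needed homotopy equivalence. The twin-folding argument you sketch is completely rigorous and is probably the cleanest way to present it; the Quillen fiber lemma version works too, but to be precise one should pass to the face posets (excluding the empty face) and apply Quillen's Theorem~A, checking that each $p^{-1}(\Gamma_{\le\tau'})$ is the face poset of a nonempty simplex and hence has contractible order complex.

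What each approach buys: the paper's argument yields the stronger statement that \emph{all} total Betti numbers of $S/J(G)$ and $S'/J(\pi(G))$ coincide, at the price of invoking Hartshorne's flatness result and Terai's duality as black boxes. Your argument is more elementary and self-contained on the edge-ideal side, avoids Alexander duality entirely, and makes the combinatorial reason for the equality transparent; it gives exactly the regularity (the maxima in Hochster's formula match), though not a direct Betti-number comparison.
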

\begin{proof}
For any $i=1,\ldots ,p$ call $V_i$ the set of vertices of $G$ that collapses to the vertex $i$ of $\pi(G)$. Then consider the homomorphism
$$
\begin{array}{rcl}
\phi: S' & \longrightarrow & S\\
 y_i & \mapsto & \prod_{j\in V_i}x_j=:m_i
\end{array}
$$
By the correspondence of basic $1$-covers of $G$ and $\pi(G)$ described in Proposition  \ref{isom}, one easily sees that $\phi(J(\pi(G)))S=J(G)$. Moreover it is obvious that $m_1,\ldots ,m_p$ form a regular sequence of $S$, so by a theorem of Hartshorne (\cite[Proposition 1]{Ha2}) $S$ is a flat $S'$-module via  $\phi$. Then if $F_{\bullet}$ is a minimal free resolution of $S'/J(\pi(G))$ over $S'$ it follows that $F_{\bullet}\otimes_{S'}S$ is a minimal free resolution of $S/J(G)$ over $S$. Therefore the total Betti numbers of $S'/J(\pi(G))$ and of $S/J(G)$ are the same, and in particular $\pd(S/J(G))=\pd(S'/J(\pi(G)))$. Thus \cite{Te} yields the conclusion.
\end{proof}

\begin{thm}\label{cmr}
Let $G$ be a bipartite graph satisfying the WSC. Then the Castelnuovo-Mumford regularity of $S/I(G)$  is equal to the maximum size of a pairwise disconnected set of edges of $G$.
\end{thm}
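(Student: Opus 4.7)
The plan is to reduce to the bipartite unmixed graph $\pi(G)$ via Lemma~\ref{reg}, invoke Kummini's theorem \cite{Ku}, and then identify the combinatorial invariants on $G$ and on $\pi(G)$.

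By Lemma~\ref{reg}, $\reg(S/I(G))=\reg(S'/I(\pi(G)))$. Since $G$ is bipartite, each component $K_{a_i,b_i}$ of $G^{0-1}$ has its two parts $A_i,B_i$ lying on opposite sides of the bipartition of $V(G)$; every edge of $\pi(G)$ comes from an edge of $G$ and hence respects this induced coloring, so $\pi(G)$ is bipartite. The WSC hypothesis together with Theorem~\ref{CM} forces $\pi(G)$ to be Cohen--Macaulay, and in particular unmixed. Kummini's theorem from \cite{Ku}, applied to the bipartite unmixed graph $\pi(G)$, then gives that $\reg(S'/I(\pi(G)))$ equals the maximum size of a pairwise disconnected set of edges of $\pi(G)$.

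It remains to prove that this last invariant agrees on $G$ and on $\pi(G)$. The crucial tool is \cite[Lemma~2.6]{BV}: whenever $\{U,W\}$ is an edge of $\pi(G)$, the subgraph of $G$ induced on $U\cup W$ is complete bipartite. To show that the invariant on $\pi(G)$ is at most the one on $G$, given a pairwise disconnected family $\{\{U_i,W_i\}\}_{i=1}^s$ in $\pi(G)$ I choose any edges $e_i=\{a_i,b_i\}$ of $G$ with $a_i\in U_i$ and $b_i\in W_i$; these $e_i$ are pairwise vertex-disjoint, and any edge of $G$ joining $e_i$ to $e_j$ would, by the very definition of $\pi(G)$, produce an edge of $\pi(G)$ between two elements of $\{U_i,W_i,U_j,W_j\}$ outside the chosen pairs, contradicting the assumption. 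For the reverse direction, given a pairwise disconnected set $\{e_1,\ldots,e_s\}$ in $G$ with $e_i=\{a_i,b_i\}$, let $U_i,W_i$ be the pieces of $\pi(G)$ containing $a_i$ and $b_i$ respectively. Any coincidence among $U_i,W_i,U_j,W_j$, or any edge of $\pi(G)$ between two of these sets outside $\{U_i,W_i\}\cup\{U_j,W_j\}$, would, via \cite[Lemma~2.6]{BV}, yield an edge of $G$ joining $e_i$ to $e_j$, contradicting pairwise disconnectedness in $G$; hence $\{\{U_i,W_i\}\}_{i=1}^s$ is a pairwise disconnected family in $\pi(G)$ of the same size.

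Chaining the three identities proves the theorem. The main obstacle is the combinatorial step showing that the maximum-size-of-pairwise-disconnected-edges invariant is preserved under $\pi$; however, the rigidity provided by the complete bipartite structure in \cite[Lemma~2.6]{BV} essentially forces the correspondence in both directions, so this reduces to careful book-keeping rather than genuinely new combinatorics.
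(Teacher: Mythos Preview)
Your argument is correct and follows essentially the same route as the paper: reduce to $\pi(G)$ via Lemma~\ref{reg}, use Theorem~\ref{CM} to control $\pi(G)$, and then identify the pairwise-disconnected-edges invariant on $G$ and on $\pi(G)$. The only notable difference is the reference you invoke for the regularity formula on $\pi(G)$: the paper cites \cite[Corollary~2.2.b]{HH} (which applies because $\pi(G)$ is bipartite Cohen--Macaulay), whereas you cite Kummini \cite{Ku} (which applies already under the weaker hypothesis that $\pi(G)$ is bipartite unmixed). Either citation works here; your route makes the use of Theorem~\ref{CM} slightly redundant, since unmixedness of $\pi(G)$ follows directly from the perfect matching of right edges without invoking shellability. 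You also spell out in detail, via \cite[Lemma~2.6]{BV}, the preservation of the combinatorial invariant under $\pi$, which the paper simply asserts in one line; your verification is accurate and a welcome clarification.
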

\begin{proof}
By Lemma \ref{reg}, using the same  notation, $\reg(S/I(G))=\reg(S'/I(\pi(G)))$. Moreover,  the maximum size of a pairwise disconnected set of edges in $G$ is equal to the same number for $\pi(G)$. Since $\pi(G)$ is Cohen-Macaulay by Theorem \ref{CM}, one can deduce the conclusion using \cite[Corollary 2.2.b]{HH}.
\end{proof}


\begin{thebibliography}{99}
\addcontentsline{toc}{chapter}{Bibliografia}
\bibitem[BCV]{BCV} B. Benedetti, A. Constantinescu, M. Varbaro, \textit{Dimension, depth and zero-divisors of the algebra of basic $k$-covers of a graph}, Le Matematiche LXIII, n. II, pp. 117-156, 2008.
\bibitem[BeVa]{BV} B. Benedetti, M. Varbaro, \textit{Unmixed graphs that are domains}, available on line at arXiv:0901.3897, 2009.
\bibitem[Bi]{B} G. Birkhoff, \emph{Lattice Theory}, 3rd. ed., Amer. Math. Soc. Colloq. Publ. No. 25, Amer. Math. Soc. Providence, R.I., 1967.
\bibitem[Bj]{Bj} A. Bj\"orner, \textit{Shellable and Cohen-Macaulay Partially Ordered Sets}, Trans. Amer. Math. Soc. 260 159-183, 1980.
\bibitem[BG]{BG} W. Bruns, J. Gubeladze, \textit{Polytopes, rings and $K$-theory}, Springer Monographs in Mathematics, Springer, Dordrecht, 2009.
\bibitem[BH]{BH} W. Bruns, J. Herzog, \textit{Cohen-Macaulay rings}, Cambridge studies in advanced mathematics, 1993.
\bibitem[BrVe]{BrVe} W. Bruns, U. Vetter, \textit{Determinantal rings}, Lecture notes in mathematics 1327, 1980.
\bibitem[DEP]{DCEP} C. De Concini, D. Eisenbud, C. Procesi, \textit{ Hodge Algebras} , Ast\'erisque 91, 1982.
\bibitem[Fr]{Fr} R. Fr\"oberg, \textit{On Stanley-Reisner rings}, Topics in algebra, Banach Center Publications 26 Part 2, pp. 57-70, 1990.
\bibitem[HV]{HV} H. T. H\`a, A. Van Tuyl, \textit{Monomial ideals, edge ideals of Hypergraphs, and their graded Betti numbers}, J. Algebraic Combin. 27, n. 2, pp. 215-245, 2008.
\bibitem[Ha1]{Ha1} R. Hartshorne, \textit{Complete intersection and connectedness}, Amer. J. Math. 84, pp. 497-508, 1962.
\bibitem[Ha2]{Ha2} R. Hartshorne, \textit{A property of $A$-sequences}, Bull. de la S. M. F. 94, pp. 61-65, 1966.
\bibitem[HH]{HH} J. Herzog, T. Hibi, \textit{Distributive lattices, bipartite graphs and Alexander duality}, J. Algebraic Combin. 22, n. 3, pp. 289-302, 2005.
\bibitem[HHT]{HHT} J. Herzog, T. Hibi, N. V. Trung, \textit{Symbolic powers of monomial ideals and vertex cover algebras}, Adv. in Math. 210, pp. 304-322, 2007.
\bibitem[Hi]{H} T. Hibi, \textit{Distributive lattice, affine semigroup rings and algebras with straightening laws}, Adv. Stud. Pure Math. 11, 1987.
\bibitem[KS]{KS} M. Kalkbrener, B. Sturmfels, \textit{Initial Complex of Prime Ideals}, Adv. in Math. 116, pp. 365-376, 1995.
\bibitem[Ka]{Ka} M. Katzman, \textit{Characteristic-independence of Betti numbers of graph ideals}, J. Combin. Theory Ser. A 113, n. 3, pp. 435-454, 2006.
\bibitem[Ku]{Ku} M. Kummini, \textit{Regularity, depth and arithmetic rank of bipartite edge ideals}, available on line at arXiv:0902.0437, 2009.

\bibitem[LP]{LP}
L. Lov\'asz,  M.D. Plummer, \emph{Matching theory}, North-Holland Mathematics Studies, 121. Annals of Discrete Mathematics, 29. North-Holland Publishing Co., Amsterdam; Akad\'emiai Kiad\'o (Publishing House of the Hungarian Academy of Sciences), Budapest, 1986.
\bibitem[Ly]{Ly} G. Lyubeznik, \textit{On the local cohomology modules $H_{\mathfrak{U}}^i(R)$ for ideals $\mathfrak{U}$ generated by an $R$-sequence}, ``Complete Intersection'', Lect. Notes in Math. 1092, pp. 214-220, 1984.
\bibitem[Ly1]{Ly1}G. Lyubeznik, \textit{On the Arithmetical Rank of Monomial Ideals}, J. Alg., 112, 1988.
\bibitem[MRV]{MRV} S. Morey, E. Reyes, R.H. Villareal, \textit{Cohen-Macaulay, shellable and unmixed clutters with a perfect matching of K\"onig type}, J. Pure and Appl. Alg. 212, pp. 1770-1786, 2008.
\bibitem[NR]{NR} D.G. Northcott, D. Rees, \textit{Reduction of ideals in local rings}, Proc. Cambridge Philos. Soc. 50, pp. 145-158, 1954.
\bibitem[Ri]{Ri} G. Rinaldo, \textit{Koszulness of vertex cover algebras of bipartite graphs}, to appear on Comm. in Alg., 2009.
\bibitem[Te]{Te} N. Terai, \textit{Alexander duality theorem and Stanley-Reisner rings. Free resolutions of projective varieties and related topics}, S\=urikaisekikenky\=usho K\=oky\=uroku 1078, pp. 174-184, 1999.
\bibitem[Va]{V} M. Varbaro, \textit{Gr\"obner deformations, connectedness and cohomological dimension}, J. Alg. 322, pp. 2492-2507, 2009.
\bibitem[Zh]{Zh} X. Zheng, \textit{Resolution of facet ideals}, Comm. in Alg. 32, n. 6, pp. 2301-2324, 2004.
\end{thebibliography}
\end{document}